\documentclass{amsart}
\usepackage[english]{babel}
\usepackage{amsrefs}
\usepackage{amssymb,amsmath,amsthm,amsfonts,epsfig,graphicx,psfrag,color}
\usepackage[utf8]{inputenc}
\usepackage{amsrefs}
\usepackage{hyperref}
% \numberwithin{equation}{section}
\hypersetup{
    colorlinks,
    citecolor=black,
    filecolor=blue,
    linkcolor=black,
    urlcolor=blue
}

\theoremstyle{plain}
\newtheorem{teo}{Theorem}[section]
\newtheorem{thm}[teo]{Theorem}
\newtheorem{cor}[teo]{Corollary}
\newtheorem{lem}[teo]{Lemma}
\newtheorem{prop}[teo]{Proposition}		

\theoremstyle{definition}
\newtheorem{df}[teo]{Definition}
\newtheorem{exa}[teo]{Example}
\newtheorem{rmk}[teo]{Remark}

\DeclareMathOperator{\diam}{diam}

\DeclareMathOperator{\dist}{dist}

\newcommand{\rep}{\mathcal{H}^+(\R)}
\newcommand{\repo}{\mathcal{H}^+_0(\R)}

\DeclareMathOperator{\fix}{fix}

\newcommand{\R}{\mathbb R}

\newcommand{\Z}{\mathbb Z}

\renewcommand{\epsilon}{\varepsilon}

\begin{document}

\author{Alfonso Artigue}
\address{Departamento de Matemática y Estadística del Litoral, Universidad de la Rep\'ublica, 
Gral. Rivera 1350, Salto, Uruguay.}
\email{artigue@unorte.edu.uy}
% \title{On the definition of expansive flow}
% \title{On the meaning of expansivity for flows}
% \title{Separating and rescaled expansive flows}
% \title{Separating and rescaling expansive flows}
\title{Rescaled expansivity and separating flows}
\date{\today}
\begin{abstract} 
In this article we give sufficient conditions for Komuro expansivity to imply the
rescaled expansivity recently introduced by Wen and Wen. 
Also, we show that a flow on a compact metric space is expansive in the sense of Katok-Hasselblatt if and only if it is separating in the sense of Gura 
and the set of fixed points is open.
\end{abstract}
\maketitle

\section{Introduction}
The study of expansive flows started in 1972 with the works of Bowen, Walters and Flinn \cite{BW,Flinn}. 
See \S \ref{secBWExp} for the definition.
% The initial motivation was to develop a theory analogous to that of expansive homeomorphisms. 
Since then, several results have been translated from dynamics of discrete time (i.e., expansive homeomorphisms) to flows without fixed points. 
In \cite{KS} Keynes and Sears extended results of Mañé proving that a space 
admitting an expansive systems has finite topological dimension. 
Also, they proved (under certain conditions) that if a space admits a minimal expansive flow then the space is one-dimensional.
In \cite{IM,Pa} Inaba and Matsumoto and, independently, Paternain extended results of Hiraide and Lewowicz of expansive homeomorphisms on surfaces to flows on three-dimensional manifolds. 
Other results of expansive three-dimensional flows were developed in \cite{Bru93} by Brunella.
In \cite{MSS} Moriyasu, Sakai and Sun characterized the $C^1$-robustly expansive vector fields by the condition of being quasi-Anosov, 
extending another result of Mañé for diffeomorphisms. 
More recently, in \cite{Willy} Cordeiro introduced the notion of cw-expansive flow, 
and proved that if the topological dimension of the space is greater than 1 then the topological entropy of a cw-expansive flow on this space is positive (which extends 
a previous result by Kato).
In light of these results, we can fairly say that the definition of expansivity of \cite{BW,Flinn} 
is an adequate way to translate expansivity from homeomorphisms to flows without fixed points, as it gives a reasonably parallel theory.

For the general theory of dynamical systems, the attractor discovered by Lorenz in 1963 presented several challenges that could not be understood in the framework of hyperbolicity, in the usual and uniform sense.
A special characteristic of this attractor is the presence of a fixed point. From a topological viewpoint the main properties of a hyperbolic set are expansivity and 
the possibility of shadowing pseudo-orbits. 
In the study of the \emph{expansive properties} of the Lorenz attractor, 
Komuro proved that it is $k^*$-expansive, a notion introduced in \cite{K}.
To put this particular dynamics in a more general theory, 
in \cite{APPV} Araujo, Pacifico, Pujals and Viana considered 
singular hyperbolic attractors, 
and proved that this kind of systems are $k^*$-expansive, generalizing the result of Komuro that we mentioned.
Singular hyperbolicity is a property that captured the essence of the Lorenz attractor specially on three-manifolds.
On higher dimensional manifolds new phenomena can occur as the existence of two fixed points with different indices on a common recurrence class. 
Recently, Bonatti and da Luz \cite{BodL} introduced the notion of multisingular hyperbolicity to provide a general framework 
to the understanding of singular flows on manifolds of arbitrary dimension.
Naturally, it is of interest to determine the \emph{expansive properties} implied by multisingular hyperbolicity. 
It is not known whether multisingular hyperbolic sets are $k^*$-expansive.
In \cite{WW} Wen and Wen introduced a new form of expansivity, called \emph{rescaled expansivity}, and they proved that 
multisingular hyperbolicity implies this definition. 
In that paper the authors ask whether there are connections between $k^*$-expansivity and their rescaled version.

In \S \ref{secKomYRes} of the present paper we will study rescaling expansive flows. 
Our main result is Theorem \ref{thmKestImpRes},
where we will give sufficient conditions for a $k^*$-expansive flow to be rescaling expansive. 
The condition we found, that we call \emph{efficiency} (see \S \ref{secEfficient}), 
is satisfied (for instance) if the fixed points of the flow are hyperbolic, Corollary \ref{corRescTiempo}. 
The notion of efficiency is introduced to have a control of the distance of the extreme points of a small orbit segment 
over the diameter of the segment. We give examples showing that rescaled expansivity does not imply $k^*$-expansivity.
Also, we consider a rescaled Riemannian metric induced by the original Riemannian metric and the velocity field of the flow. 
We will show that BW-expansivity with respect to the rescaled metric implies rescaled expansivity.

We will also prove some relations between other versions of expansivity that can be found in the literature, 
and we give \emph{simpler} characterizations.
In \S \ref{secSepFlowMain} we consider a weak form of expansivity, introduced by Gura \cite{Gura} and called \emph{separating}. 
It is defined without reparameterizations of the orbits. 
In this section we clarify the meaning of the definition of expansive flow in the Katok-Hasselblatt book \cite{HK}.
In Theorem \ref{thmHKExp} we show that a flow of a compact metric space 
is expansive in the sense of \cite{HK} if and only if it is separating and the set of fixed points is open.

In \S \ref{secFlowOfSurf} we consider flows of compact surfaces. We show that every strongly separating and efficient flow is rescaling expansive. 
In \S \ref{secExSph} we study a time change of the \emph{north-south flow} on the two-sphere. 
We show that rescaled expansivity is not invariant under time changes of the flow and that 
a rescaling expansive flow may not be strongly separating.

\section{Separating flows}
\label{secSepFlowMain}
Let $\phi\colon \R\times \Lambda\to\Lambda$ be a continuous flow of the metric space $(\Lambda,\dist)$.
As fixed points will play a key role in the paper, in \S \ref{secFixP} we start deriving some of their basic properties.
In \S \ref{secSepFlow} we consider separating flows, as defined by Gura, and HK-expansivity, a way to define expansivity for flow from \cite{HK}.

\subsection{Fixed points} 
\label{secFixP}
The set of \emph{fixed} points of $\phi$ is defined as
\[
\fix(\phi)=\{\sigma\in\Lambda:\phi_t(\sigma)=\sigma\text{ for all }t\in\R\}.
\]
A point that is not fixed is called \emph{regular}.
The continuity of $\phi$ implies that $\fix(\phi)$ is closed in $\Lambda$.
The next result is an extension of \cite[Lemma 2]{BW}. 

\begin{lem}
\label{lemFixOpen}
For a flow $\phi$ on a compact metric space $(\Lambda,\dist)$ the following statements are equivalent: 
\begin{itemize}
 \item $\fix(\phi)$ is an open subset of $\Lambda$,
 \item there is $T_0>0$ such that for all 
$T\in (0,T_0]$ there is $\xi>0$ such that $\dist(\phi_T(x),x)>\xi$ for all $x\notin\fix(\phi)$.
\end{itemize}
\end{lem}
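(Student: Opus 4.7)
The plan is to prove the two implications separately, the easy one first.

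\textbf{Implication $(\Leftarrow)$.} Assume the second condition and suppose by contradiction that some $\sigma\in\fix(\phi)$ is not an interior point. Then there is a sequence $x_n\to\sigma$ with $x_n\notin\fix(\phi)$. Fix any $T\in(0,T_0]$ and let $\xi>0$ be given by the hypothesis, so $\dist(\phi_T(x_n),x_n)>\xi$ for every $n$. But continuity of $\phi_T$ gives $\phi_T(x_n)\to\phi_T(\sigma)=\sigma$, so $\dist(\phi_T(x_n),x_n)\to 0$, a contradiction.

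\textbf{Implication $(\Rightarrow)$.} Assume $\fix(\phi)$ is open, hence $\Lambda\setminus\fix(\phi)$ is closed and therefore compact. I would break the argument into two steps.

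\emph{Step 1: bounded-below periods.} I claim there exists $T_0>0$ such that no regular periodic point has period $\leq T_0$. If not, one can pick regular periodic points $z_n$ with minimal periods $\tau_n\to 0^+$. By compactness, pass to a subsequence with $z_n\to z_\infty$. For any $t\in\R$ write $t=k_n\tau_n+r_n$ with $|r_n|<\tau_n$; then $\phi_t(z_n)=\phi_{r_n}(z_n)$, and since $r_n\to 0$ and $\phi$ is continuous, $\phi_t(z_n)\to z_\infty$. On the other hand $\phi_t(z_n)\to\phi_t(z_\infty)$, so $\phi_t(z_\infty)=z_\infty$ for every $t$, i.e.\ $z_\infty\in\fix(\phi)$. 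Because $\fix(\phi)$ is open, $z_n\in\fix(\phi)$ for all sufficiently large $n$, contradicting the regularity of the $z_n$.

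\emph{Step 2: uniform lower bound on $\dist(\phi_T(x),x)$.} Fix $T\in(0,T_0]$ and define $f\colon\Lambda\setminus\fix(\phi)\to[0,\infty)$ by $f(x)=\dist(\phi_T(x),x)$. If $f(x)=0$ for some regular $x$, then $x$ is periodic with period dividing $T\leq T_0$, contradicting Step 1. So $f>0$ on the compact set $\Lambda\setminus\fix(\phi)$, and by continuity $f$ attains a positive minimum $\xi>0$, which is exactly the conclusion.

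The main technical obstacle is Step 1, i.e.\ ruling out arbitrarily short regular periodic orbits; once this classical ``no small periods'' fact is established in the compact setting from the openness of $\fix(\phi)$, the rest is a direct continuity-plus-compactness argument. Both steps use the compactness of $\Lambda$ in an essential way.
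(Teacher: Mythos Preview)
Your proof is correct. Both the paper's argument and yours rest on the same core observation: if the second condition fails, one can produce regular periodic points with periods tending to zero, and any limit of such points must be a fixed point accumulated by regular points, contradicting openness of $\fix(\phi)$.

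The difference is purely organizational. The paper runs a single double-sequence contradiction: it assumes the second statement fails, picks $T_n\to 0$ and for each $n$ a sequence $x^n_m\notin\fix(\phi)$ with $\dist(\phi_{T_n}(x^n_m),x^n_m)\to 0$, passes to limits $y_n$ (which are regular since $\fix(\phi)$ is open and satisfy $\phi_{T_n}(y_n)=y_n$), and then to a further limit $z$, which is forced to be fixed yet accumulated by the regular $y_n$. You instead extract the ``no small periods'' statement as a standalone Step~1 and then reduce Step~2 to the positivity of a continuous function on the compact set $\Lambda\setminus\fix(\phi)$. Your decomposition is a bit more modular and makes the two uses of compactness (limit of short-period orbits, minimum of a continuous function) explicit; the paper's version is marginally shorter but intertwines the two ideas. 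Either way the substance is the same.
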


\begin{proof}
\emph{Direct}. Arguing by contradiction, suppose that there is 
a sequence $T_n\to 0$, $T_n>0$, and 
for each $n\geq 1$ there is a sequence $\xi^n_m>0$ such that 
$\xi^n_m\to 0$ as $m\to\infty$ and 
$\dist(x^n_m,\phi_{T_n}(x^n_m))<\xi^n_m$ for some $x^n_m\in \Lambda\setminus \fix(\phi)$. 
Since $\Lambda$ is compact we assume that $x^n_m\to y_n\in \Lambda$ as $m\to\infty$. 
Given that $\fix(\phi)$ is open, we have that $y_n\notin\fix(\phi)$.
Suppose that $y_n\to z\in\Lambda$. 
By the continuity of the flow, $\phi_{T_n}(y_n)=y_n$ for all $n\geq 1$. 
Since $T_n\to 0$, given $\tau\in\R$ there is a sequence $k_n\in\Z$ such that $k_nT_n\to\tau$. 
Then $\phi_{k_nT_n}(y_n)=y_n$ and, taking $n\to\infty$, we conclude that $\phi_\tau(z)=z$. 
As $\tau\in\R$ is arbitrary, we have that $z\in\fix(\phi)$. 
This contradicts that $\fix(\phi)$ is open because $z$ is accumulated by $y_n\notin\fix(\phi)$.

\emph{Converse}. 
Suppose that $x_n\in\Lambda\setminus\fix(\phi)$ with $x_n\to y\in\Lambda$. 
Since $\dist(\phi_T(x_n),x_n)>\xi$ for all $n\geq 1$, taking limit we have that $\dist(\phi_T(y),y)\geq\xi>0$ and $y\notin\fix(\phi)$. 
Then $\Lambda\setminus\fix(\phi)$ is closed and $\fix(\phi)$ is open.
\end{proof}

The orbit of $x$ will be denoted as $\phi_\R(x)$.
We say that $\sigma\in\fix(\phi)$ is \emph{dynamically isolated} if there is $r>0$ such that $\phi_\R(x)\subset B(\sigma,r)$ implies $x=\sigma$,
where $B(x,a)$ denotes the open ball of radius $a$ centered at $x$. 
% The proof of the following lemma is direct from the definitions.

\begin{lem}
\label{lemIsoBetaCero}
 If $\Lambda$ is compact and each $\sigma\in\fix(\phi)$ is dynamically isolated then 
 there is $\beta_0>0$ such that if $\diam(\phi_\R(x))<\beta_0$ then $x\in\fix(\phi)$.
\end{lem}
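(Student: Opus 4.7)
The plan is to argue by contradiction: assume no such $\beta_0$ exists, and produce a non-fixed point that violates the dynamical isolation hypothesis. So suppose there is a sequence $x_n\in \Lambda\setminus\fix(\phi)$ with $\diam(\phi_\R(x_n))\to 0$. Using the compactness of $\Lambda$, pass to a subsequence to obtain $x_n\to z$ for some $z\in\Lambda$.

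The key step is to show that $z\in\fix(\phi)$. For each $t\in\R$, one has
\[
\dist(\phi_t(x_n),z)\leq \dist(\phi_t(x_n),x_n)+\dist(x_n,z)\leq \diam(\phi_\R(x_n))+\dist(x_n,z),
\]
which tends to $0$. Hence $\phi_t(x_n)\to z$. On the other hand, by continuity of the flow, $\phi_t(x_n)\to \phi_t(z)$. Therefore $\phi_t(z)=z$ for every $t\in\R$, that is, $z\in\fix(\phi)$.

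Now I would invoke the hypothesis: $z$ is dynamically isolated, so there is $r>0$ such that $\phi_\R(x)\subset B(z,r)$ forces $x=z$. The same diameter estimate as above shows that $\phi_\R(x_n)\subset B(z,\diam(\phi_\R(x_n))+\dist(x_n,z))$, and for $n$ large enough the right-hand radius is smaller than $r$. Thus $\phi_\R(x_n)\subset B(z,r)$ and hence $x_n=z\in\fix(\phi)$, contradicting that $x_n\notin\fix(\phi)$.

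I do not anticipate any serious obstacle: the argument is an essentially standard compactness/continuity extraction, and the only ingredient beyond that is the mild observation that a small orbit diameter plus convergence of a single orbit point propagates to uniform proximity of the whole orbit to the limit. The one place to be a bit careful is ensuring that the two limits $\phi_t(x_n)\to z$ (from the diameter bound) and $\phi_t(x_n)\to\phi_t(z)$ (from flow continuity) are genuinely available at the same $t$, but both hold for every fixed $t\in\R$, so the conclusion $z\in\fix(\phi)$ follows.
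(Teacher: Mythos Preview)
Your proof is correct and follows essentially the same approach as the paper's: both argue by contradiction, extract a convergent sequence of non-fixed points with shrinking orbit diameters, show the limit is a fixed point, and then use dynamical isolation to derive a contradiction. Your version is simply more explicit about the triangle-inequality estimates and the use of flow continuity where the paper writes only ``the continuity of the flow implies that $\sigma$ is a fixed point.''
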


\begin{proof}
 Arguing by contradiction suppose that there is a sequence $x_n$ such that $\diam(\phi_\R(x_n))>0$ and converges to 0. 
 Since $\Lambda$ is compact we can assume that $x_n$ has a limit point $\sigma$. The continuity of the flow implies that $\sigma$ is a fixed point. 
 Given that $\diam(\phi_\R(x_n))>0$ we see that $x_n\neq\sigma$, and on every neighborhood of $\sigma$ we have whole orbits, not containing $\sigma$, which contradicts that 
 $\sigma$ is dynamically isolated.
\end{proof}

\subsection{Separating flows} 
\label{secSepFlow}
In this section we consider two definitions of expansivity. 
The first one does not consider parameterizations of trajectories.

\begin{df}[\cite{Gura}] 
We say that $\phi$ is \emph{separating} if there is $\delta>0$ such that if 
\[
\dist(\phi_t(x),\phi_t(y))<\delta\text{ for all }t\in\R 
\]
then $y\in\phi_\R(x)$. In this case we say that $\delta$ is a \emph{separating constant}.
\end{df}

\begin{rmk}
\label{rmkSepFix}
From the definition it follows that if $\phi$ is separating 
then 
each fixed point is dynamically isolated and
if the space is compact we conclude that $\fix(\phi)$ is a finite set. 
\end{rmk}

The second definition of this section does consider reparameterizations 
but in a very particular way.

\begin{df}[\cite{HK}]
\label{dfHKExp}
We say that $\phi$ is \emph{KH-expansive}
if there is $\delta>0$ such that:
if $x\in \Lambda$, $s\colon \R\to\R$ is continuous, $s(0)=0$
and $\dist(\phi_t(x),\phi_{s(t)}(x))<\delta$ for all $t\in\R$, 
$y\in \Lambda$ is such that $\dist(\phi_t(x),\phi_{s(t)}(y))<\delta$ 
for all $t\in\R$ then $y\in\phi_\R(x)$.
\end{df}

\begin{prop}
\label{propHkFixOpen}
 If $\phi$ is KH-expansive then $\fix(\phi)$ is open. 
\end{prop}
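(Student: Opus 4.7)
The plan is to apply Definition~\ref{dfHKExp} directly at each fixed point using the trivial reparameterization $s\equiv 0$, exploiting that the definition requires of $s$ only continuity and $s(0)=0$. Concretely, I would let $\delta>0$ be a KH-expansivity constant, pick an arbitrary $\sigma\in\fix(\phi)$, and test the definition with $x=\sigma$ and $s$ identically zero.

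Both distance conditions trivialize at a fixed base point. Since $\phi_t(\sigma)=\sigma$ for every $t\in\R$, the first one reads $\dist(\phi_t(\sigma),\phi_{s(t)}(\sigma))=0<\delta$ automatically. For any $y\in\Lambda$ with $\dist(y,\sigma)<\delta$, the second one reads $\dist(\phi_t(\sigma),\phi_{s(t)}(y))=\dist(\sigma,y)<\delta$ for every $t$, which holds by the choice of $y$. KH-expansivity then forces $y\in\phi_\R(\sigma)=\{\sigma\}$, so $y=\sigma$. I would conclude that $B(\sigma,\delta)\cap\Lambda=\{\sigma\}$, meaning that each fixed point is actually an isolated point of $\Lambda$---a strictly stronger statement than the openness of $\fix(\phi)$. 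Writing $\fix(\phi)=\bigcup_{\sigma\in\fix(\phi)}\{\sigma\}$ as a union of open singletons then gives the proposition.

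I do not anticipate any real obstacle here. The whole proof is a single application of the definition, and the only subtlety worth checking is that the constant reparameterization $s\equiv 0$ is indeed admissible as phrased in Definition~\ref{dfHKExp}. If instead the definition were read as requiring $s$ to be a homeomorphism of $\R$, this shortcut would break and one would need a separate argument (exploiting that, by KH-expansivity applied with $x=\sigma$ and $s=\mathrm{id}$, orbits of regular points sufficiently close to $\sigma$ must leave $B(\sigma,\delta)$, combined with a compactness or exit-time analysis).
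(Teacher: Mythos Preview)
Your argument is correct and matches the paper's proof almost verbatim: both take $x=\sigma\in\fix(\phi)$ and the constant reparameterization $s\equiv 0$, then use KH-expansivity to force any nearby $y$ into $\phi_\R(\sigma)=\{\sigma\}$. The only cosmetic difference is that the paper phrases it by contradiction (assuming $\sigma$ is accumulated by regular points), while you argue directly and observe the slightly stronger conclusion that each fixed point is isolated in $\Lambda$.
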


\begin{proof}
 Suppose that $p\in\fix(\phi)$ is accumulated by regular points $x_n\notin\fix(\phi)$, $x_n\to p$. 
Take $x_n\in B(p,\delta)$ and $s(t)=0$ for all $t\in\R$. 
Then, $\dist(\phi_t(p),\phi_{s(t)}(p))=0$ for all $t\in\R$ and
$\dist(\phi_t(p),\phi_{s(t)}(x_n))<\delta$ for all $t\in\R$. 
Since $x_n\notin\phi_\R(p)$ we conclude that $\phi$ is not KH-expansive. 
\end{proof}

\begin{prop}
\label{propHKImpSep}
 Every KH-expansive flow is separating. 
\end{prop}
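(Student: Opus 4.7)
The plan is to observe that separating is literally the special case of KH-expansivity obtained by choosing the reparameterization $s$ to be the identity. So I would just unpack the definitions and take $s(t)=t$ in the KH-expansivity hypothesis.

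More concretely, I would start by assuming $\phi$ is KH-expansive with constant $\delta>0$, and claim that $\delta$ is also a separating constant. To verify this, let $x,y\in\Lambda$ satisfy $\dist(\phi_t(x),\phi_t(y))<\delta$ for all $t\in\R$. I would then set $s\colon\R\to\R$ to be the identity map $s(t)=t$; this is continuous and satisfies $s(0)=0$. With this choice, $\dist(\phi_t(x),\phi_{s(t)}(x))=0<\delta$ trivially, and $\dist(\phi_t(x),\phi_{s(t)}(y))=\dist(\phi_t(x),\phi_t(y))<\delta$ by hypothesis.

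The two conditions in Definition \ref{dfHKExp} are therefore met, so KH-expansivity forces $y\in\phi_\R(x)$, which is exactly what separating requires. There is no real obstacle here; the only thing to check is that the identity is an admissible choice of reparameterization in the KH-expansive definition, which is immediate from the stated hypotheses on $s$ (continuity and $s(0)=0$, with no monotonicity or surjectivity required).
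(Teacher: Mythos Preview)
Your proof is correct and is exactly the paper's approach: the paper's proof consists of the single sentence ``To see that $\phi$ is separating we take $s(t)=t$ for all $t\in\R$ in the definition,'' which is precisely the specialization you carry out in detail.
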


\begin{proof}
To see that $\phi$ is separating we take $s(t)=t$ for all $t\in\R$ in the definition. 
\end{proof}

In general, the converse of Proposition \ref{propHKImpSep} is not true: 

\begin{exa}
\label{exaToroUnaSing}
Consider the two-dimensional torus $\Lambda=\R^2/\Z^2$ with its usual flat Riemannian metric. 
Let $X$ be a constant vector field on $\Lambda$ generating a minimal flow (every orbit is dense in $\Lambda$).  
Assume that $\|X(x)\|=1$ for all $x\in \Lambda$.
Take a smooth function $\rho\colon \Lambda\to[0,1]$ vanishing only at a given point $\sigma\in \Lambda$. 
Let $\phi$ be the flow induced by $\rho X$. 
It is not KH-expansive because $\fix(\phi)=\{\sigma\}$ is not open in $\Lambda$. 
The flow $\phi$ is separating, 
moreover, it is strong kinematic expansive, see \cite[Example 2.8 and Theorem 3.8]{Ar}.
\end{exa}

The next result shows that the converse of Proposition \ref{propHKImpSep} is true if we assume that the set of fixed points is open and the space is compact. 

\begin{thm}
\label{thmHKExp}
A flow $\phi$ of a compact metric space $(\Lambda,\dist)$ is KH-expansive if and only if it is separating and $\fix(\phi)$ is open.
\end{thm}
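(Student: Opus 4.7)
The direction KH-expansive $\Rightarrow$ separating and $\fix(\phi)$ open is already given by Propositions \ref{propHKImpSep} and \ref{propHkFixOpen}, so the plan is to prove the converse. Assume $\phi$ is separating with constant $\delta_0$ and that $\fix(\phi)$ is open. By Remark \ref{rmkSepFix}, $\fix(\phi)$ is finite and each of its points is dynamically isolated, so Lemma \ref{lemIsoBetaCero} provides $\beta_0>0$ with $\diam\phi_\R(x)<\beta_0$ forcing $x\in\fix(\phi)$. Choose $r_0>0$ so that the balls $\{B(\sigma,r_0)\}_{\sigma\in\fix(\phi)}$ are pairwise disjoint and contained in $\fix(\phi)$; Lemma \ref{lemFixOpen} gives $T_0,\xi>0$ with $\dist(\phi_{T_0}(z),z)>\xi$ for every regular $z$; and uniform continuity of $\phi$ on $[-1,1]\times\Lambda$ yields $\eta\in(0,1]$ such that $|\tau|<\eta$ implies $\dist(\phi_\tau(z),z)<\xi/2$ for all $z\in\Lambda$. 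The candidate KH-expansivity constant will be $\delta:=\min\{\delta_0/3,\ \beta_0/3,\ r_0,\ \xi/4\}$.

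Given $x,y,s$ satisfying the hypotheses of Definition \ref{dfHKExp} for this $\delta$, I split into three cases. If $x\in\fix(\phi)$, the second hypothesis puts $\phi_{s(t)}(y)\in B(x,\delta)\subset\fix(\phi)$; invariance of the fixed set forces $y\in\fix(\phi)$, then $\phi_{s(t)}(y)=y$, and finally $y\in B(x,r_0)\cap\fix(\phi)=\{x\}$ by the choice of $r_0$. If $x$ is regular but $y\in\fix(\phi)$, the second hypothesis becomes $\phi_\R(x)\subset B(y,\delta)$, so $\diam\phi_\R(x)<2\delta<\beta_0$ and Lemma \ref{lemIsoBetaCero} contradicts regularity. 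In the remaining main case $x$ and $y$ are both regular; the triangle inequality applied to the two hypotheses then yields $\dist(\phi_u(x),\phi_u(y))<2\delta<\delta_0$ for every $u\in s(\R)$, and if one can show $s(\R)=\R$ the separating property immediately gives $y\in\phi_\R(x)$.

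The heart of the proof, and the step I expect to be the main obstacle, is therefore establishing $s(\R)=\R$ in this regular case. Because $\fix(\phi)$ is open and flow-invariant, the whole orbit $\phi_\R(x)$ lies in the compact set of regular points, and Lemma \ref{lemFixOpen} applies along it. Combining $\dist(\phi_t(x),\phi_{t+T_0}(x))>\xi$ with the first hypothesis and the triangle inequality yields
\begin{equation*}
\dist\bigl(\phi_{s(t)}(x),\,\phi_{s(t+T_0)}(x)\bigr)\,>\,\xi-2\delta\,>\,\xi/2\qquad\text{for every } t\in\R.
\end{equation*}
By the contrapositive of the choice of $\eta$, the continuous function $\Delta(t):=s(t+T_0)-s(t)$ satisfies $|\Delta(t)|\geq\eta$ everywhere; being nowhere zero and continuous, its sign is constant. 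Iterating $\Delta(t)\geq\eta$ (respectively $\Delta(t)\leq-\eta$) along $\{nT_0\}_{n\in\Z}$ and using $s(0)=0$ drives $s(nT_0)\to\pm\infty$ as $n\to\pm\infty$, so the intermediate value theorem forces $s$ to be surjective. This closes the argument.
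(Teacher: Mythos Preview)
Your proof is correct and follows the same overall strategy as the paper: handle the fixed-point case directly, then in the regular case prove that $s$ is surjective and invoke the separating property along $s(\R)=\R$. The only substantive difference is how surjectivity of $s$ is obtained. You bound the increment $\Delta(t)=s(t+T_0)-s(t)$ away from zero and iterate, while the paper argues more directly that $|s(t)-t|<T_0$ for all $t$: since $s(0)=0$, a failure would give some $u$ with $|s(u)-u|=T_0$, whence Lemma~\ref{lemFixOpen} yields $\dist(\phi_u(x),\phi_{s(u)}(x))>\xi\geq\delta$, contradicting the first hypothesis. This gives surjectivity immediately (indeed $|s(t)-t|$ bounded) without introducing $\eta$ or the increment argument. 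Your extra case ``$x$ regular, $y$ fixed'' and the use of Lemma~\ref{lemIsoBetaCero} are harmless but unnecessary: once $y\in\phi_\R(x)$ with $x$ regular, $y$ is automatically regular.
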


\begin{proof}
The direct part follows by Propositions \ref{propHkFixOpen} and \ref{propHKImpSep}. 
To prove the converse, suppose that $\phi$ is separating and $\fix(\phi)$ is open. 
Let $\alpha$ be a separating constant. 
Since $\Lambda$ is compact we have that $\fix(\phi)$ is finite and there is
$r>0$ such that $B(p,r)=\{p\}$ for all $p\in\fix(\phi)$. 
Consider $\xi$ with $T=T_0$ from Lemma \ref{lemFixOpen} and take 
$\delta=\min\{\alpha/2,\xi,r\}$. 
Suppose that $x,y\in \Lambda$, $s\colon\R\to\R$ is continuous, $s(0)=0$, 
\begin{equation}
\label{ecuSepHka}
\dist(\phi_t(x),\phi_{s(t)}(x))<\delta\text{ for all }t\in \R 
\end{equation}
and 
\begin{equation}
 \label{ecuSepHkb}
  \dist(\phi_t(x),\phi_{s(t)}(y))<\delta\text{ for all }t\in \R. 
\end{equation}
By \eqref{ecuSepHka}, \eqref{ecuSepHkb} and the triangular inequality we deduce 
\begin{equation}
 \label{ecuSepHkc}
 \dist(\phi_{s(t)}(x),\phi_{s(t)}(y))<2\delta\leq\alpha\text{ for all }t\in\R. 
\end{equation}
If $x\in\fix(\phi)$, from \eqref{ecuSepHkb} with $t=0$, we see that $y=x$ because $\delta\leq r$.
Assume that $x\notin\fix(\phi)$. 
We will show that $|s(t)-t|<T$ for all $t\in\R$. 
If this is not the case, as $s$ is continuous and $s(0)=0$, 
there is $u\in\R$ such that $|s(u)-u|=T$. 
Then, $\dist(\phi_{s(u)}(x),\phi_u(x))>\xi\geq\delta$, which contradicts \eqref{ecuSepHka}.
The condition $|s(t)-t|<T$ for all $t\in\R$ implies that $s$ is onto. 
The facts that $s$ is onto and that $\phi$ is separating, with separating constant $\alpha$, allow us to conclude from \eqref{ecuSepHkc} that $y\in\phi_\R(x)$. 
This proves that $\phi$ is KH-expansive.
\end{proof}

\subsection{BW-expansivity}
\label{secBWExp}Let $\rep$ be the set of all the increasing homeomorphisms $h\colon\R\to\R$ and 
$\repo=\{h\in\rep:h(0)=0\}$. 
A flow $\phi$ defined on a metric space $(\Lambda,\dist)$ is \emph{BW-expansive} \cite{BW,Flinn} if for all $\epsilon>0$ there is $\delta>0$ such that 
if $x,y\in\Lambda$, $h\in\repo$ and $\dist(\phi_t(x),\phi_{h(t)}(y))<\delta$ for all $t\in\R$ then $\phi_{h(t)}(y)\in\phi_{[-\epsilon,\epsilon]}(x)$ for all $t\in\R$.
Separating flows do not share some basic properties with BW-expansive flows. 
For instance, a BW-expansive flow on a compact space has at most a finite number of periodic orbits of period less than a given constant. 
But, a separating flow may present an uncountable set of periodic orbits with bounded period: 

\begin{exa}
\label{exaAnPer}
Consider the vector field $X(x,y)=(x^2+y^2)(-y,x)$ on $\R^2$ and the invariant annulus $\Lambda=\{(x,y)\in\R^2: 1\leq x^2+y^2\leq 4\}$.
The orbits are circles.
The key property of this flow is that the periods of different periodic orbits are different, which easily allows us to see that 
the flow is separating. Also, this example has vanishing topological entropy. 
Then, we see no link between the counting of periodic orbits and the topological entropy for a separating flow. 
For BW-expansive flows \cite{BW} this link exists and parallels the theory of expansive homeomorphisms \cite[Theorem 5]{BW}. 
Therefore, \cite[Proposition 3.2.14]{HK} cannot be applied to separating (KH-expansive) flows. 
Thus, the arguments given in \cite{Ham} for the proof of \cite[Corollary 3.3]{Ham} seems to be not sufficient.
\end{exa}

\section{Komuro and rescaled expansivity}
\label{secKomYRes}
In \S \ref{secKomExp} we consider the Komuro's version of expansivity and we prove an equivalence. 
In \S \ref{secEfficient} we introduce the notion of efficient flow and we give sufficient conditions for a flow to be efficient.
In \S \ref{secRescaling} we consider the rescaled expansivity of \cite{WW}.
% We will use a notation, which may not be standard but will be useful: f
For $a,b\in\R$ denote by $[a,b]$ the closed interval of real numbers between $a$ and $b$ (for $a\leq b$ and $a>b$).

\subsection{Komuro expansivity} In this section we give a characterization of Komuro's expansivity that will be used later.
\label{secKomExp}

\begin{df}[\cite{K}]
 A flow $\phi$ of a compact metric space $\Lambda$ is $k^*$-\emph{expansive} if for all $\epsilon>0$ there is 
 $\delta>0$ such that if 
\begin{equation}
\label{ecuAcomp}
x,y\in\Lambda, h\in\repo\text{ and }
\dist(\phi_t(x),\phi_{h(t)}(y))\leq\delta\text{ for all }t\in\R,  
\end{equation}
then $\phi_{h(t_0)}(y)\in\phi_{[-\epsilon,\epsilon]}(\phi_{t_0}(x))$ for some $t_0\in\R$.
\end{df}

\begin{rmk}
 Every $k^*$-expansive flow is separating. 
 Consequently, $\fix(\phi)$ is finite for a $k^*$-expansive flow of a compact metric space.
\end{rmk}

% \begin{lem}
% Suppose that $x\notin\fix(\phi)$ is not periodic,
% $u\colon\R\to\R$ is a function, $g\in\rep$ and
% $\phi_{g(t)}(x)\in\phi_{[t,u(t)]}(x)$ for all $t\in\R$.
% Then $g(t)\in [t,u(t)]$ for all $t\in\R$.
% \end{lem}
% 
% \begin{proof}
% It is trivial, because the map $t\mapsto \phi_t(x)$ is injective.
% \end{proof}
% 
% 
% \begin{lem}
% Suppose that $x\notin\fix(\phi)$ is in a periodic orbit $\gamma$ of period $\tau$, $u\colon\R\to\R$ a function, $h\in\rep$,
% \begin{equation}
% \label{eqRescLem}
%  \sup_{t\in\R}\diam(\phi_{[t,u(t)]}(x))<\diam(\gamma)/2 
% \end{equation}
% and $\phi_{h(t)}(x)\in\phi_{[t,u(t)]}(x)$ for all $t\in\R$,
% then there is $k\in\Z$ such that $g\in\rep$ 
% defined as $g(t)=h(t)+k\tau$ satisfies
% $g(t)\in [t,u(t)]$ for all $t\in\R$.
% \end{lem}
% 
% \begin{proof}
% Since $\phi_{h(t)}(x)\in\phi_{[t,u(t)]}(x)$ for all $t\in\R$, 
% there is a function $k\colon \R\to\Z$ such that 
% $h(t)+\tau k(t)\in[t,u(t)]$ for all $t\in\R$.
% We will show that $k$ is continuous. 
% Take $L>0$ such that
% \[
%  \sup_{t\in\R}\diam(\phi_{[t,u(t)]}(x))<L<\diam(\gamma)/2.
% \]
% Given $t\in\R$, define 
% $$
% \tau_1=\inf\{s\geq t:\diam(\phi_{[t,s]}(x))=L\}
% $$
% and 
% $$
% \tau_2=\sup\{s\leq t:\diam(\phi_{[t,s]}(x))=L\}.
% $$
% From \eqref{eqRescLem} we have that $\tau_1-\tau_2<\tau$ and $\tau_2<u(t)<\tau_1$.
% By the continuity of the flow, there is $\delta>0$ such that if $|r-t|<\delta$ then 
% $\tau_2<u(r)<\tau_1$.
% Therefore, $k(r)=k(t)$ whenever $|r-t|<\delta$. 
% This implies that $k$ is constant.
% \end{proof}

\begin{lem}
\label{lemRepKomExp}
Suppose that $x\notin\fix(\phi)$, $u\colon\R\to\R$ is a function, $h\in\rep$,
\begin{eqnarray}
\label{eqRescLem1}
\sup_{t\in\R}\diam(\phi_{[t,u(t)]}(x))<\diam(\phi_\R(x))/2\text{ and} \\
\label{eqRescLem2}\phi_{h(t)}(x)\in\phi_{[t,u(t)]}(x)\,\forall t\in\R.
\end{eqnarray}
Then there is $g\in\rep$ 
such that
$g(t)\in [t,u(t)]$ for all $t\in\R$ 
and $\phi_{g(t)}(x)=\phi_{h(t)}(x)$ for all $t\in\R$.
\end{lem}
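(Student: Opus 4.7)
The plan is to case-split on whether $x$ is periodic. If $x$ has a non-periodic orbit, then $s \mapsto \phi_s(x)$ is injective, so hypothesis \eqref{eqRescLem2} forces $h(t) \in [t, u(t)]$ for all $t$, and $g := h$ works.

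Assume from now on that $x$ is periodic of period $T > 0$, and write $D = \diam(\phi_\R(x))$. As $b - a \uparrow T$, the arc $\phi_{[a, b]}(x)$ tends to the full orbit in Hausdorff distance, so its diameter tends to $D$; combined with the strict inequality $M := \sup_t \diam(\phi_{[t, u(t)]}(x)) < D/2$ from \eqref{eqRescLem1}, this yields a uniform bound $|u(t) - t| \leq T - \eta$ for some $\eta > 0$. In particular $s \mapsto \phi_s(x)$ is injective on each closed interval $[t, u(t)]$, so \eqref{eqRescLem2} determines a unique $g(t) \in [t, u(t)]$; writing $g(t) = h(t) + k(t) T$ with $k(t) \in \Z$, it suffices to show that $k\colon \R \to \Z$ is constant, since then $g = h + k T$ automatically lies in $\rep$.

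For local constancy of $k$, fix $t_0$ and assume for contradiction that $t_n \to t_0$ with $k(t_n) = k(t_0) + 1$; the case $-1$ is symmetric, and $|k(t_n) - k(t_0)| \geq 2$ is precluded at once since $|g(s) - s| \leq T - \eta < T$. Then $g(t_n) - t_n \to (g(t_0) - t_0) + T$. Since $g(s) - s$ and $u(s) - s$ always share the same sign, the positivity of the limit forces $u(t_n) > t_n$ for large $n$; together with $u(t_n) - t_n \leq T - \eta$ this gives $g(t_0) - t_0 \leq -\eta$, so $u(t_0) \leq g(t_0) < t_0$. Extracting a subsequence along which $u(t_n) - t_n \to L' \geq (g(t_0) - t_0) + T$, continuity of $\phi$ gives $\phi_{[t_n, u(t_n)]}(x) \to \phi_{[t_0, t_0 + L']}(x)$ in Hausdorff distance, so the limit arc still has diameter at most $M$; moreover $L' + (t_0 - u(t_0)) \geq T$ because $u(t_0) \leq g(t_0)$.

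The final contradiction is now immediate. The union $\phi_{[u(t_0), t_0 + L']}(x) = \phi_{[u(t_0), t_0]}(x) \cup \phi_{[t_0, t_0 + L']}(x)$ has parameter length at least $T$, hence coincides with the full orbit $\phi_\R(x)$ and has diameter $D$; but both of its pieces have diameter at most $M$ and share the point $\phi_{t_0}(x)$, so by the triangle inequality the diameter of the union is at most $2M < D$, a contradiction. Hence $k$ is locally constant, therefore constant, completing the proof. The main obstacle is precisely this last step: $u$ carries no regularity hypothesis, sub-arc lengths may well exceed $T/2$, and nothing prevents a ``jump by $T$'' in the unique local choice of $g$ except the quantitative diameter bound in \eqref{eqRescLem1}.
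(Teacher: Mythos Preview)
Your proof is correct and follows the same strategy as the paper's: dispose of the non-periodic case by injectivity of $s\mapsto\phi_s(x)$, and in the periodic case write $g(t)=h(t)+k(t)T$ with $k(t)\in\Z$ and show that $k$ is locally constant. The paper's execution of the local-constancy step is slightly different---it fixes an auxiliary level $L\in(M,D/2)$, defines for each $t$ the window $(\tau_2,\tau_1)$ of first backward/forward times at which $\diam(\phi_{[t,s]}(x))=L$, notes this window has length $<T$ and contains $g(t)$, and concludes by continuity---whereas you argue by an explicit contradiction via Hausdorff limits and a two-arc covering of the full orbit; the underlying idea (the strict bound $M<D/2$ prevents a period-jump) is the same.
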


\begin{proof}
If $x$ is not periodic then the result is trivial because the map $t\mapsto \phi_t(x)$ is injective and from 
\eqref{eqRescLem2} we can take $g=h$.

Suppose that $x\in\gamma$, a periodic orbit of period $\tau$.
By \eqref{eqRescLem2}
% Since $\phi_{h(t)}(x)\in\phi_{[t,u(t)]}(x)$ for all $t\in\R$, 
there is a function $k\colon \R\to\Z$ such that 
$g(t)=h(t)+\tau k(t)\in[t,u(t)]$ for all $t\in\R$.
We will show that $k$ is constant. 
Take $L>0$ such that
\[
 \sup_{t\in\R}\diam(\phi_{[t,u(t)]}(x))<L<\diam(\gamma)/2.
\]
Given $t\in\R$, define 
\begin{eqnarray*}
\tau_1=\inf\{s\geq t:\diam(\phi_{[t,s]}(x))=L\}\text{ and}\\
\tau_2=\sup\{s\leq t:\diam(\phi_{[t,s]}(x))=L\}.
\end{eqnarray*}
% 
% $$
% \tau_1=\inf\{s\geq t:\diam(\phi_{[t,s]}(x))=L\}
% $$
% and 
% $$
% \tau_2=\sup\{s\leq t:\diam(\phi_{[t,s]}(x))=L\}.
% $$
From \eqref{eqRescLem1} we have that $\tau_1-\tau_2<\tau$ and $\tau_2<u(t)<\tau_1$.
By the continuity of the flow, there is $\delta>0$ such that if $|r-t|<\delta$ then 
$\tau_2<u(r)<\tau_1$.
Therefore, $k(r)=k(t)$ whenever $|r-t|<\delta$. 
This implies that $k$ is constant.
\end{proof}

The next result will be used in Theorem \ref{thmKestImpRes} to connect $k^*$-expansivity with rescaled expansivity.

\begin{thm}
\label{thmCharK*}
For a flow $\phi$ on a compact metric space $(\Lambda,\dist)$ the following statements are equivalent:
\begin{enumerate}
 \item[a)] $\phi$ is $k^*$-expansive,
 \item[b)] for all $\beta>0$ there is 
 $\delta>0$ such that 
 \eqref{ecuAcomp} implies that $y=\phi_u(x)$ for some $u\in\R$ and $\diam(\phi_{[0,u]}(x))<\beta$,
 \item[c)] for all $\beta>0$ there is 
 $\delta>0$ such that 
 \eqref{ecuAcomp} implies 
 that there is $g\in\rep$ such that $\phi_{h(t)}(y)=\phi_{g(t)}(x)$ for all $t\in\R$ with $\diam(\phi_{[t,g(t)]}(x))<\beta$ for all $t\in\R$.
\end{enumerate}
\end{thm}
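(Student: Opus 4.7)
The proof will proceed by establishing the cycle $c) \Rightarrow b) \Rightarrow a) \Rightarrow c)$: the first step is trivial, the second uses compactness and dynamical isolation of $\fix(\phi)$ to convert a spatial diameter bound into a bound on the orbit-time shift, and the third combines these ideas with the reparameterization machinery of Lemma \ref{lemRepKomExp}.

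The implication $c) \Rightarrow b)$ is immediate: evaluate the conclusion of $c)$ at $t = 0$ and set $u = g(0)$. For $b) \Rightarrow a)$, first note that taking $h = \mathrm{id}$ in $b)$ shows $\phi$ is separating, so $\fix(\phi)$ is finite and each fixed point is dynamically isolated. A standard limit argument (using Lemma \ref{lemIsoBetaCero}) then produces, for each $\epsilon > 0$, constants $\eta, \beta_* > 0$ with the property that $\dist(z, \fix(\phi)) \geq \eta$ and $\diam(\phi_{[0, s]}(z)) < \beta_*$ imply $|s| < \epsilon$. Apply $b)$ with $\beta = \beta_*$ to get $\delta$. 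Since the hypothesis \eqref{ecuAcomp} is invariant under simultaneously shifting the base point by $t$ and replacing $h$ by $s \mapsto h(t + s) - h(t) \in \repo$, applying $b)$ to each shifted triple yields, for every $t$, some $U_t$ with $\phi_{h(t)}(y) = \phi_{t + U_t}(x)$ and $\diam(\phi_{[t, t + U_t]}(x)) < \beta_*$. The case $x \in \fix(\phi)$ is trivial. Otherwise, finiteness and dynamical isolation of $\fix(\phi)$ force the orbit of $x$ to meet $\{z : \dist(z, \fix(\phi)) \geq \eta\}$; at such a $t_0$, $|U_{t_0}| < \epsilon$ and hence $\phi_{h(t_0)}(y) \in \phi_{[-\epsilon, \epsilon]}(\phi_{t_0}(x))$, proving $k^*$-expansivity.

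For $a) \Rightarrow c)$, given $\beta > 0$, one may assume $\beta < \beta_0$ (with $\beta_0$ from Lemma \ref{lemIsoBetaCero}). By uniform continuity of $\phi$ on $\Lambda$, choose $\epsilon > 0$ small enough that $|s| \leq \epsilon$ forces $\diam(\phi_{[0, s]}(z)) < \beta/2$ for all $z \in \Lambda$, and apply $a)$ with this $\epsilon$ to obtain $\delta$. Separating (implied by $a)$) gives $y = \phi_U(x)$; the case $x \in \fix(\phi)$ is trivial. Otherwise, the plan is to apply the shifted version of $a)$ at each base point $\phi_t(x)$ to produce, for every $t$, an upper bound $u(t)$ with $\phi_{u(t)}(x) = \phi_{h(t) + U}(x)$ and $\diam(\phi_{[t, u(t)]}(x)) < \beta/2$. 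Since $\beta/2 < \beta_0/2 \leq \diam(\phi_\R(x))/2$, the hypotheses of Lemma \ref{lemRepKomExp} are satisfied by $u$ and the monotone map $t \mapsto h(t) + U \in \rep$, and the lemma produces the desired $g \in \rep$ with $g(t) \in [t, u(t)]$, $\phi_{g(t)}(x) = \phi_{h(t)}(y)$, and $\diam(\phi_{[t, g(t)]}(x)) < \beta$.

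The main obstacle is the construction of $u(t)$ uniformly in $t$ for the step $a) \Rightarrow c)$: the $k^*$-expansive property only guarantees the existence of \emph{some} orbit-time at which the reparameterization shift is small, not a uniform bound at every base point. Upgrading this pointwise-in-$t_0$ statement to a well-defined upper bound $u(t)$ for all $t$ (and handling the periodic case, where one must pick a consistent lift modulo the period) is the technical heart of the implication; once this is accomplished, Lemma \ref{lemRepKomExp} handles the continuous selection.
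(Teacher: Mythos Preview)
Your cycle $c)\Rightarrow b)\Rightarrow a)\Rightarrow c)$ differs from the paper's route, which cites $a)\Leftrightarrow b)$ from \cite[Theorem 1.3]{Ar}, notes $c)\Rightarrow b)$ is immediate, and proves only $b)\Rightarrow c)$. Your arguments for $c)\Rightarrow b)$ and $b)\Rightarrow a)$ are fine (for the latter, the constant $\eta$ can in fact be taken independent of $\epsilon$, as the isolating radius for $\fix(\phi)$).

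The genuine gap is in $a)\Rightarrow c)$. You correctly diagnose the obstacle---$k^*$-expansivity gives a small time-shift only at \emph{some} $t_0$, not at every $t$---but you do not resolve it; you write ``once this is accomplished, Lemma \ref{lemRepKomExp} handles the continuous selection'' and stop. That upgrade from a single $t_0$ to all $t$ is precisely the nontrivial content of $a)\Rightarrow b)$ in \cite{Ar}, so your $a)\Rightarrow c)$ is really an unproved $a)\Rightarrow b)$ followed by $b)\Rightarrow c)$. As written, the implication is incomplete.

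The paper avoids this difficulty entirely by starting from $b)$ rather than $a)$: condition $b)$, applied to each shifted pair $(\phi_t(x),\phi_{h(t)}(y))$, \emph{directly} yields a function $u\colon\R\to\R$ with $\phi_{h(t)}(y)\in\phi_{[t,u(t)]}(x)$ and $\diam(\phi_{[t,u(t)]}(x))<\beta_0$ for all $t$; choosing $\beta_0<\beta$ with $2\beta_0<\diam(\gamma)$ for every regular orbit $\gamma$ (via Lemma \ref{lemIsoBetaCero}) then puts you exactly in the hypotheses of Lemma \ref{lemRepKomExp}, which produces $g\in\rep$. The simplest repair of your proof is therefore to replace the step $a)\Rightarrow c)$ by $b)\Rightarrow c)$ along these lines, and either cite \cite{Ar} for $a)\Leftrightarrow b)$ or supply a direct proof of $a)\Rightarrow b)$.
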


\begin{proof}
 The equivalence of a) and b) was proved in \cite[Theorem 1.3]{Ar}. 
 It is clear that c) implies b). Let us show that b) implies c). 
 Suppose that $\beta>0$ is given. 
 The condition b) implies that the fixed points are dynamically isolated and we can apply Lemma \ref{lemIsoBetaCero}
 to obtain $\beta_0\in (0,\beta)$ such that for every orbit $\gamma$ (not a fixed point) it 
 holds that $2\beta_0<\diam(\gamma)$.  
 Applying b), there is $\delta_0>0$ such that 
 if $x,y\in\Lambda$, $h\in\repo$ and 
 $\dist(\phi_t(x),\phi_{h(t)}(y))\leq\delta_0$ for all $t\in\R$ then   
 $y=\phi_u(x)$ for some $u\in\R$ and $\diam(\phi_{[0,u]}(x))<\beta_0$. 
 If this condition is applied to each pair $(\phi_t(x),\phi_{h(t)}(y))$, for each $t\in\R$, 
 we obtain a function $u\colon\R\to\R$ and we conclude that 
 $\diam(\phi_{[t,u(t)]}(x))<\beta_0$ and $\phi_{h(t)}(y)\in\phi_{[t,u(t)]}(x)$ for all $t\in\R$.
 By Lemma \ref{lemRepKomExp} there is $g\in\rep$ such that 
 $g(t)\in [t,u(t)]$ for all $t\in\R$ and
$\phi_{h(t)}(y)=\phi_{g(t)}(x)$ for all $t\in\R$.
%  We will show that $\diam(\phi_{[t,g(t)]}(x))<\beta_0$ for all $t\in\R$. 
%  Since the inequality holds for $t=0$, if it is not true for all $t\in \R$ then there is $s\in\R$ such that 
%  $\diam(\phi_{[s,g(s)]}(x))=\beta_0$. 
%  
%  Let $p=\phi_s(x)$, $q=\phi_{g(s)}(x)$ and $j\in\repo$ be defined by $j(u)=h(s+u)-h(s)$. 
%  Since $\phi_u(p)=\phi_{u+s}(x)$ and $\phi_{j(u)}(q)=\phi_{h(u+s)}(y)$ we have that 
%  $\dist(\phi_u(p),\phi_{j(u)}(q))<\delta_0$ for all $u\in\R$ and 
%  then there is $v\in\R$ such that $q=\phi_v(p)$ with $\diam(\phi_{[0,v]}(q))<\beta_0$. 
%  The orbit segment $\phi_{[s,g(s)]}(x)$ also connects $p$ and $q$ and it is different from $\phi_{[0,v]}(q)$ because 
%  $\diam(\phi_{[0,v]}(q))<\beta_0$ and $\diam(\phi_{[s,g(s)]}(x))=\beta_0$. 
%  This implies that the union of these arcs is a periodic orbit of diameter less than $2\beta_0$. 
%  This contradicts that periodic orbits have diameter less than $2\beta_0$ and finishes the proof.
\end{proof}

\subsection{Efficient flows} 
\label{secEfficient}
In this section we will assume that 
$\phi\colon\R\times M\to M$ is a flow with $C^1$ velocity field $X$, where
$M\subset \R^d$ is a compact $C^2$ manifold.
On $\R^d$ we consider the norm $\|\cdot\|$ induced by the usual inner product $\left<\cdot,\cdot\right>$.
The distance on $M$ is the restriction of $\dist(x,y)=\|y-x\|$ for all $x,y\in\R^d$. 
Assume that $\Lambda\subset M$ is invariant under $\phi$.
The restriction of $\phi$ to $\Lambda$ will be denoted as $\phi|_\Lambda$. 
\begin{df}
 \label{dfEfficient}
We say that $\phi|_\Lambda$ is \emph{efficient} 
if there is $\delta_*>0$ such that if 
 $0<\delta<\delta_*$, $x\in \Lambda\setminus\fix(\phi)$, $g\in\rep$ satisfy:
 \begin{itemize}
  \item $\dist(\phi_t(x),\phi_{g(t)}(x))<\delta\|X(\phi_t(x))\|$ for all $t\in\R$ and
  \item $\diam(\phi_{[t,g(t)]}(x))<\delta_*$ for all $t\in\R$
 \end{itemize}
then $\phi_{[t,g(t)]}(x)\subset B(\phi_t(x),\delta\|X(\phi_t(x))\|)$ for all $t\in\R$.
\end{df}

Technical details aside, the idea is that 
a flow is efficient if
for every small orbit arc $\phi_{[0,t]}(x)$, its diameter is controlled by the distance between the extreme points.
In this section we give sufficient conditions for a flow to be efficient.
Section \ref{secResNonEff} is about an example that is not efficient.

The following elementary result is essentially \cite[Exercise 14, p. 25]{DoCarmo}. 

\begin{lem}
\label{lemDoCarmo}
Let $\gamma\colon[a,b]\to\R^d$ be a $C^2$ curve such that $\|\dot\gamma(t)\|=1$ for all $t\in[a,b]$. 
\begin{itemize}
  \item If $a<s<b$ and $\dist(\gamma(a),\gamma(s))=\sup_{a\leq t\leq b}\dist(\gamma(a),\gamma(t))$ then 
  $\|\ddot\gamma(s)\|\geq1/\dist(\gamma(a),\gamma(s))$.
  \item If $R>0$, $\|\ddot\gamma(t)\|\leq 1/R$ for all $t\in[a,b]$ and $\diam(\gamma([a,b]))<R$ 
  then $\diam(\gamma([a,b]))=\dist(\gamma(a),\gamma(b))$.
 \end{itemize}
\end{lem}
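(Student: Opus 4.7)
The natural tool for both parts is the squared distance function $f(t)=\|\gamma(t)-p\|^{2}$ for a suitable basepoint $p$. Since $\|\dot\gamma\|\equiv 1$, differentiating gives
\[
f'(t)=2\langle\gamma(t)-p,\dot\gamma(t)\rangle,\qquad f''(t)=2+2\langle\gamma(t)-p,\ddot\gamma(t)\rangle.
\]
For part (i) I would take $p=\gamma(a)$. Since $s\in(a,b)$ is an interior maximum of $f$, one has $f''(s)\le 0$, which gives $\langle\gamma(s)-\gamma(a),\ddot\gamma(s)\rangle\le-1$. The Cauchy--Schwarz inequality then yields $\|\gamma(s)-\gamma(a)\|\cdot\|\ddot\gamma(s)\|\ge 1$, which is the desired bound.

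For part (ii) the plan is to argue by contradiction. Let $D=\diam(\gamma([a,b]))$ and pick $(u^*,v^*)\in[a,b]^{2}$ with $u^*\le v^*$ realizing the maximum of $(u,v)\mapsto\|\gamma(u)-\gamma(v)\|$. If both extrema are attained at the endpoints, i.e.\ $u^*=a$ and $v^*=b$, there is nothing to prove. Otherwise, at least one of $u^*$, $v^*$ lies in the open interval $(a,b)$. Apply the reasoning of part (i) (which only used that $s$ is an interior maximum of $f(t)=\|\gamma(t)-p\|^{2}$, with $p$ any fixed point of $\R^{d}$): taking $p=\gamma(v^*)$ when $u^*\in(a,b)$, or $p=\gamma(u^*)$ when $v^*\in(a,b)$, one deduces
\[
\|\ddot\gamma(s)\|\ \ge\ \frac{1}{\|\gamma(u^*)-\gamma(v^*)\|}\ =\ \frac{1}{D}
\]
at the relevant interior point $s$. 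Combining this with the hypotheses $\|\ddot\gamma\|\le 1/R$ and $D<R$ gives $1/R\ge\|\ddot\gamma(s)\|\ge 1/D>1/R$, a contradiction. Hence the diameter is achieved at $(a,b)$, as required.

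The only subtlety I anticipate is making sure the ``interior maximum'' argument from (i) is phrased generally enough to handle an arbitrary basepoint $p$, not just $\gamma(a)$; once this mild generalization is made explicit, part (ii) is immediate from the curvature bound. No routine calculation beyond the two derivatives of $f$ is needed.
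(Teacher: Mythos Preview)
Your proof is correct and follows essentially the same route as the paper: both use the squared-distance function, the second-derivative test at an interior maximum, and Cauchy--Schwarz for part (i), and then a contradiction argument for part (ii) that invokes part (i) with a general basepoint. Your remark that part (i) must be stated for an arbitrary basepoint $p$ (not just $\gamma(a)$) is exactly the point the paper uses implicitly when it applies the inequality with $\gamma(c)$ in place of $\gamma(a)$.
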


\begin{proof}
% Arguing by contradiction suppose that there are $a\leq a'<b'\leq b$ such that 
% $\diam(\gamma([a,b]))=\dist(\gamma(a'),\gamma(b'))$ 
% with $a'\neq a$ or $b'\neq b$. 
% 
% Suppose that $b'\neq b$.
% 
Let $\varphi\colon [a,b]\to\R$ be defined as $\varphi(t)=\|\gamma(a)-\gamma(t)\|^2$. 
We have that $\varphi$ has a maximum at $t=s$. 
Then $\dot\varphi(s)=\left<\gamma(a)-\gamma(s),\dot\gamma(s)\right>=0$ and 
% $\varphi''(b')\leq 0$. 
% This implies that $\left<\gamma(a')-\gamma(b'),\dot\gamma(b')\right>=0$ and 
\[
 \ddot\varphi(s)=\left<\gamma(a)-\gamma(s),\ddot\gamma(s)\right>+\left<\dot\gamma(s),\dot\gamma(s)\right>\leq 0.
\]
Since $\|\dot\gamma(s)\|=1$ we have that 
$\left<\gamma(a)-\gamma(s),\ddot\gamma(s)\right>\leq-1$. 
Thus $$\|\gamma(a)-\gamma(s)\| \|\ddot\gamma(s)\|\geq 1.$$ 
% and 
% $$R (1/R)>\diam(\gamma([a,b])) \|\ddot\gamma(s)\|\geq 1.$$ 
% This contradiction finishes the proof.
This proves the first part. 

Now, arguing by contradiction, assume that $\diam(\gamma([a,b]))=\dist(\gamma(c),\gamma(s))$ with $a<s<b$. 
Then 
\[
 \frac 1{\dist(\gamma(c),\gamma(s))}\leq \|\ddot\gamma(s)\|\leq\frac 1 R
\]
This contradicts that $\diam(\gamma([a,b]))<R$ and the proof ends.
\end{proof}

Let $\kappa\colon M\setminus\fix(\phi)\to\R$ be the curvature of the regular
trajectory of $\phi$ at $x\in M$, $x\notin\fix(\phi)$.
It can be calculated as
\begin{equation}
\label{ecuCurvatura}
  \kappa(x)=\frac{\sqrt{\|d_xX(X(x))\|^2 \|X(x)\|^2-\left<d_xX(X(x)),X(x)\right>^2}}
%  \kappa(x)=\frac{\left(\|d_xX(X(x))\|^2 \|X(x)\|^2-\left<d_xX(X(x)),X(x)\right>^2\right)^{1/2}}
 {\|X(x)\|^3}.
\end{equation}
See \cite[Exercise 12, p. 25]{DoCarmo}.
It is the curvature of the trajectory as a curve in $\R^d$. 

\begin{prop}
\label{propCurvEff} 
Let $\phi$ be a flow on a compact manifold $M$ with $C^1$ velocity field $X$ and 
an invariant subset $\Lambda$.
If the curvature $\kappa|_\Lambda$ is bounded then $\phi|_\Lambda$ is efficient.
\end{prop}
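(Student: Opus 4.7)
The plan is to reparametrize each orbit arc by arc length and apply the second part of Lemma~\ref{lemDoCarmo}. Let $K = \sup_{\Lambda \setminus \fix(\phi)} \kappa$, finite by hypothesis. If $K = 0$, every regular trajectory is an affine line in $\R^d$, so the diameter of any orbit arc equals the distance between its endpoints and any $\delta_* > 0$ works. Otherwise set $R = 1/K$ and choose any $\delta_* \in (0, R)$; I claim this $\delta_*$ has the required property.

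Fix $x \in \Lambda \setminus \fix(\phi)$, $g \in \rep$, $0 < \delta < \delta_*$ satisfying the two hypotheses of Definition~\ref{dfEfficient}, and take an arbitrary $t \in \R$. Assume $t \leq g(t)$ (the opposite case is symmetric). Since $x$ is regular, $X$ does not vanish on the compact arc $\phi_{[t, g(t)]}(x)$, so the arc-length function $\tau \mapsto \int_t^\tau \|X(\phi_u(x))\|\,du$ is a $C^2$ diffeomorphism from $[t, g(t)]$ onto some $[0, L]$. Its inverse yields a $C^2$ unit-speed curve $\gamma \colon [0, L] \to \R^d$ with image $\phi_{[t, g(t)]}(x)$, $\gamma(0) = \phi_t(x)$ and $\gamma(L) = \phi_{g(t)}(x)$. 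Formula \eqref{ecuCurvatura} identifies $\|\ddot\gamma(s)\|$ with $\kappa(\gamma(s))$, so $\|\ddot\gamma(s)\| \leq K = 1/R$ throughout.

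The hypothesis $\diam(\phi_{[t, g(t)]}(x)) < \delta_* < R$ makes the second part of Lemma~\ref{lemDoCarmo} applicable, giving $\diam(\gamma([0, L])) = \dist(\gamma(0), \gamma(L)) = \dist(\phi_t(x), \phi_{g(t)}(x))$. By the first hypothesis this last quantity is less than $\delta \|X(\phi_t(x))\|$, so for every $y \in \phi_{[t, g(t)]}(x)$ one has $\dist(\phi_t(x), y) \leq \diam(\phi_{[t, g(t)]}(x)) < \delta \|X(\phi_t(x))\|$, that is, $y \in B(\phi_t(x), \delta \|X(\phi_t(x))\|)$.

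There is no substantial obstacle; the argument is a direct invocation of Lemma~\ref{lemDoCarmo} once the reparametrization is in place. The only point that merits attention is the $C^2$ regularity of $\gamma$, which relies on $X \in C^1$ together with the fact that the arc lies in $M \setminus \fix(\phi)$, ensuring that $\|X\|$ is $C^1$ on a neighborhood of the arc and that the arc-length function is $C^2$ in $\tau$.
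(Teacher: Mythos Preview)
Your proof is correct and follows essentially the same route as the paper: bound the curvature by $1/R$ and invoke the second part of Lemma~\ref{lemDoCarmo} on the arc-length reparametrization of each orbit segment. The paper's proof is a two-line sketch that just cites the lemma; you have spelled out the arc-length reparametrization, the $C^2$ regularity check, and the degenerate case $K=0$, and your choice $\delta_*<R$ makes the strict inequality required by the lemma explicit.
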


\begin{proof}
Take $R>0$ such that $\kappa(x)\leq 1/R$ for all $x\in \Lambda\setminus\fix(\phi)$. 
By Lemma \ref{lemDoCarmo} we see that $\phi$ is efficient with $\delta_*=1/R$.
\end{proof}

\begin{rmk}
 From Proposition \ref{propCurvEff} we have that flows with $C^1$ velocity fields 
 and without fixed points
 are efficient. Around a fixed point the curvature of the trajectories may not be bounded and 
 we need more hypothesis and arguments to conclude the efficiency of the flow. 
 This is what will be done in the remaining of this section.
\end{rmk}

Define 
\[
 m(T)=\inf\{\|T(v)\|:v\in \R^d, \|v\|=1\}
\]
for a linear transformation $T$.
Given a subset $P\subset M$ and $x\in M$ we define $\dist(x,P)=\inf_{p\in P}\dist(x,p)$.

\begin{lem}
\label{lemRescTiempo}
Let $\phi$ be a flow on a compact manifold $M$ with $C^1$ velocity field $X$.
 If $d_\sigma X$ is invertible for all $\sigma\in\fix(\phi)$ 
then 
there are constants $B,C>0$ such that
\begin{equation}
 \label{ecuRegBC}
 \frac 1C\dist(z,\fix(\phi))\leq\|X(z)\|\leq B\dist(z,\fix(\phi))
\end{equation}
for all $z\in M$. 
% $\phi$ is efficient on $M$.
\end{lem}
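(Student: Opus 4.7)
The plan is to prove the upper and lower bounds separately. The upper bound is the easy half: $X\colon M\to\R^d$ is $C^1$ on the compact set $M$, hence Lipschitz with some constant $B$. Given $z\in M$, I pick $\sigma\in\fix(\phi)$ realizing $\dist(z,\fix(\phi))$ (which exists since $\fix(\phi)$ is closed in compact $M$); then
\[
\|X(z)\|=\|X(z)-X(\sigma)\|\leq B\|z-\sigma\|=B\dist(z,\fix(\phi)).
\]

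For the lower bound, I first observe that invertibility of $d_\sigma X$ implies (via the inverse function theorem in a local $C^2$ chart) that each $\sigma\in\fix(\phi)$ is isolated: in the chart $X$ is a $C^1$ map between open sets of $\R^{\dim M}$ with invertible differential at the origin, so it is a local diffeomorphism and its only local zero is $\sigma$ itself. As $\fix(\phi)$ is closed in compact $M$, this forces $\fix(\phi)=\{\sigma_1,\ldots,\sigma_N\}$ to be finite. Around each $\sigma_i$ I choose pairwise disjoint open neighborhoods $U_i$ such that $\dist(z,\fix(\phi))=\|z-\sigma_i\|$ for $z\in U_i$, and such that the first-order Taylor expansion of $X$ in a local $C^2$ parametrization $\psi_i\colon W\subset T_{\sigma_i}M\to M$ with $\psi_i(0)=\sigma_i$ and $d_0\psi_i=\mathrm{Id}$ yields
\[
\|X(z)\|\geq \tfrac{1}{2}\,m(d_{\sigma_i}X)\,\|z-\sigma_i\|
\]
for all $z\in U_i$. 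This is possible because invertibility forces $m(d_{\sigma_i}X)>0$, while the Taylor remainder of $X\circ\psi_i$ at $0$ is $o(\|v\|)$; the Euclidean norm $\|\psi_i(v)-\sigma_i\|$ is comparable to $\|v\|$ near $0$, so shrinking $U_i$ absorbs the remainder into half the linear term.

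To finish, I cover the compact complement $K=M\setminus\bigcup_i U_i$ by a uniform estimate: since $X$ does not vanish on $K$, the continuous function $\|X\|$ attains a positive minimum $m_0$ there; combined with $\dist(z,\fix(\phi))\leq\diam(M)=:D$, this gives $\|X(z)\|\geq(m_0/D)\dist(z,\fix(\phi))$ on $K$. Setting
\[
\tfrac{1}{C}=\min\bigl\{\tfrac{m_0}{D},\,\tfrac12 m(d_{\sigma_1}X),\,\ldots,\,\tfrac12 m(d_{\sigma_N}X)\bigr\}
\]
yields the lower bound on all of $M$. The only real subtlety is that $X(z)\in T_zM\subset\R^d$ while $z-\sigma_i$ lives in the ambient $\R^d$ and $d_{\sigma_i}X$ naturally acts on $T_{\sigma_i}M$; performing the linearization through the parametrization $\psi_i$ keeps everything on $T_{\sigma_i}M$, where invertibility of $d_{\sigma_i}X$ immediately gives the required linear lower bound in the chart coordinate.
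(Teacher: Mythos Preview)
Your proof is correct and follows essentially the same approach as the paper's: the Lipschitz property of $X$ for the upper bound, and first-order Taylor expansion in a chart near each fixed point combined with a compactness argument on the complement for the lower bound. You are a bit more explicit than the paper in noting that $\fix(\phi)$ is finite and in flagging the tangent-space issue, but the core argument is identical.
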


\begin{proof}
Since $X$ is $C^1$, in particular it is Lipschitz, and there is $B>0$ satisfying the second inequality of \eqref{ecuRegBC}.
% \begin{equation}
%  \label{ecuLips}
%  \|X(x)\|\leq B\dist(x,\fix(\phi)) 
% \end{equation}
% for all $x\in M$.
% We start giving some estimates. 
% Let us show that there is $C>0$ such that 
% \begin{equation}
%  \label{ecuRegC}
%  C\|X(z)\|\geq\dist(z,\fix(\phi))
% \end{equation}
% for all $z\in M$. 
To prove the other part, take $\sigma\in\fix(\phi)$ and a local chart around $\sigma$. 
We can assume that $M=\R^n$ ($n=\dim(M)$) and $\sigma$ is the origin. 
Then $X(z)=d_\sigma X(z)+r(z)$. 
By hypothesis, $d_\sigma X$ is invertible and consequently $m(d_\sigma X)>0$.
Since $X$ is differentiable at $\sigma$, there is $r_\sigma>0$ such that if $\|z\|<r_\sigma$ then $\|r(z)\|/\|z\|<\frac 12 m(d_\sigma X)=C_\sigma$.
Thus 
\[
 \frac{\|X(z)\|}{\|z\|}\geq \|d_\sigma X(z/\|z\|)\|-\|r(z)\|/\|z\|\geq C_\sigma
\]
if $\|z\|<r_\sigma$. 
Note that $\|z\|$ in local charts is equivalent to $\dist(z,\sigma)$, the Riemannian distance on $M$.
Then, there are $C_1>0$ and an open neighborhood $U\subset M$ of $\fix(\phi)$ such that $\|X(z)\|/\dist(z,\fix(\phi))\geq C_1$ for all $z\in U\setminus \fix(\phi)$. 
Since $M\setminus U$ is compact, $\|X(z)\|/\dist(z,\fix(\phi))$ is bounded away from zero for $z\in M\setminus U$ and 
there is $C>0$ satisfying the first part of \eqref{ecuRegBC}
% \begin{equation}
%  \label{ecuRegDefC}
%  C\|X(z)\|\geq\dist(z,\fix(\phi))
% \end{equation} 
for all $z\in M$. 
\end{proof}

\begin{prop}
\label{propRescTiempo}
Let $\phi$ be a flow on a compact manifold $M$ with $C^1$ velocity field $X$. 
 If fixed points are dynamically isolated and 
 $d_\sigma X$ is invertible for all $\sigma\in\fix(\phi)$ 
then $\phi$ is efficient.
\end{prop}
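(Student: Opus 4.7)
My plan is a contradiction argument combined with a blow-up at the fixed points. Suppose the conclusion fails. Then for every $n \ge 1$ I can find $\delta_n < 1/n$, $x_n \in \Lambda \setminus \fix(\phi)$, $g_n \in \rep$, a witnessing time $t_n$ and an intermediate parameter $s_n$ between $t_n$ and $g_n(t_n)$ at which the efficiency conclusion is violated. Translating time I assume $t_n = 0$ and (after possibly reversing the flow) $T_n := g_n(0) > 0$. The two bulleted conditions of Definition~\ref{dfEfficient} hold with $\delta = \delta_n$ and $\delta_* = 1/n$, so
\[
\diam(\phi_{[0,T_n]}(x_n)) < 1/n, \qquad \dist(x_n,\phi_{T_n}(x_n)) < \delta_n \|X(x_n)\|,
\]
while $\dist(x_n,\phi_{s_n}(x_n)) \ge \delta_n\|X(x_n)\|$ with $s_n\in[0,T_n]$. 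By compactness, on a subsequence $x_n \to x^* \in M$.

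If $x^* \notin \fix(\phi)$, then on a neighborhood $V$ of $x^*$ disjoint from $\fix(\phi)$ the curvature \eqref{ecuCurvatura} is bounded (being continuous with $\|X\|$ bounded below), say by $1/R$. For $n$ large the arc $\phi_{[0,T_n]}(x_n)$ sits in $V$ and has diameter less than $R$, so Lemma~\ref{lemDoCarmo}, applied to the arc-length parameterization, gives that the diameter of the arc equals the endpoint distance $\dist(x_n,\phi_{T_n}(x_n)) < \delta_n\|X(x_n)\|$. Since $\phi_{s_n}(x_n)$ lies in the arc, $\dist(x_n,\phi_{s_n}(x_n))\le \diam < \delta_n\|X(x_n)\|$, contradicting the choice of $s_n$. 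Thus $x^* = \sigma \in \fix(\phi)$, and in particular $\rho_n := \|x_n-\sigma\| \to 0$ and $\|X(x_n)\| \to 0$ (via Lemma~\ref{lemRescTiempo}).

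In this singular case I blow up at $\sigma$. Working in a local chart with $\sigma = 0$, set $w_n(t) = \phi_t(x_n)/\rho_n$. Then $w_n$ solves $\dot w_n = \tilde X_n(w_n)$ with $\tilde X_n(w) = X(\rho_n w)/\rho_n$, and since $X$ is $C^1$ with $X(\sigma) = 0$, $\tilde X_n(w) \to Aw$ uniformly on compact sets, where $A = d_\sigma X$ is invertible so $m(A) > 0$. After a further subsequence, $y_n = x_n/\rho_n \to y^*$ with $\|y^*\|=1$, and $\|\tilde X_n(y_n)\| \to \|Ay^*\| \ge m(A)$. The rescaled conditions read
\[
\|y_n - w_n(T_n)\| < \delta_n\,\|\tilde X_n(y_n)\|, \qquad \|y_n - w_n(s_n)\| \ge \delta_n\,\|\tilde X_n(y_n)\|.
\]
My plan is to show $T_n\to 0$; once this is in hand, a first-order Taylor expansion $w_n(s) = y_n + s\tilde X_n(y_n) + o(s)$ (uniform for $s\in[0,T_n]$) combined with $\|\tilde X_n(y_n)\| \ge m(A)/2$ forces $T_n \le \delta_n(1+o(1))$ from the endpoint bound and $s_n \ge \delta_n(1+o(1))$ from the witness bound, contradicting $0\le s_n\le T_n$.

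The main obstacle is proving $T_n\to 0$. Invertibility of $A$ does not exclude purely imaginary eigenvalues, so the linearized orbit $t\mapsto e^{tA}y^*$ could be periodic and $T_n$ might accumulate at some $T^*>0$; continuous dependence would then give $e^{T^*A}y^* = y^*$, compatible with the rescaled convergence. To rule this out I would combine the dynamical-isolation hypothesis on $\sigma$ with the fact that the first bulleted condition of Definition~\ref{dfEfficient} holds uniformly for \emph{every} $t\in\R$: dynamical isolation forces the original orbit $\phi_\R(x_n)$ to exit a fixed ball $B(\sigma,r)$, and at times $\tau$ where $\phi_\tau(x_n)$ is bounded away from $\fix(\phi)$ the regular-limit argument of the second paragraph applies to the arc $\phi_{[\tau,g_n(\tau)]}(x_n)$ and yields $g_n(\tau)-\tau = O(\delta_n)$. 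Making this compatible with $g_n(0) = T_n$ staying positive (using that $g_n$ is an increasing homeomorphism) would produce the required contradiction, pinning $T_n$ to be of order $\delta_n$. This is precisely the step where dynamical isolation is essential, and where Proposition~\ref{propRescTiempo} goes beyond the bounded-curvature regime of Proposition~\ref{propCurvEff}.
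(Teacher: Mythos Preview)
Your compactness-and-blow-up approach is genuinely different from the paper's, and the regular-limit case ($x^*\notin\fix(\phi)$) is correct. The singular case, however, has a real gap that you yourself flag but do not close. Your plan is to find, via dynamical isolation, a time $\tau_n$ with $\phi_{\tau_n}(x_n)$ at distance $\ge r$ from $\fix(\phi)$, apply the regular argument there to get $|g_n(\tau_n)-\tau_n|=O(\delta_n)$, and then ``propagate'' this back to $t=0$ using monotonicity of $g_n$. But monotonicity yields nothing useful: if, say, $\tau_n>0$, you only obtain $T_n=g_n(0)<g_n(\tau_n)=\tau_n+O(\delta_n)$, and $\tau_n$ is typically large because the orbit moves slowly near $\sigma$. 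There is no mechanism in your outline linking $g_n(t)-t$ at far-away times to its value at $t=0$. Your own worry about purely imaginary eigenvalues of $A=d_\sigma X$ is exactly the obstruction: the blown-up flow $e^{tA}$ may then have periodic orbits, $T_n$ can accumulate at a positive period, and the first-order Taylor step collapses. Even if you did obtain $T_n\to 0$, the Taylor estimate $\|y_n-w_n(s)\|=s\|\tilde X_n(y_n)\|+o(T_n)$ only gives $s_n\ge\delta_n(1-o(1))$ and $T_n\le\delta_n(1+o(1))$, which is not by itself a contradiction without a further monotonicity argument you have not supplied.

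The paper bypasses the blow-up entirely by extracting one uniform estimate,
\[
\kappa(z)\,\dist(z,\fix(\phi))\le A\qquad\text{for all regular }z,
\]
obtained from $\kappa(z)\le\|d_zX\|/\|X(z)\|$ and Lemma~\ref{lemRescTiempo}. With this it fixes an explicit $\delta_*$ and runs a continuation argument: start at a time $s$ (provided by dynamical isolation) where the orbit is far from $\fix(\phi)$ and verify the conclusion there via Lemma~\ref{lemDoCarmo}; then suppose $u$ is the first time (closest to $s$) at which the conclusion fails. At $u$ the arc $\phi_{[u,g(u)]}(x)$ touches the sphere $\partial B(\phi_u(x),\delta\|X(\phi_u(x))\|)$ at an interior point $z$, so the first part of Lemma~\ref{lemDoCarmo} forces $\kappa(z)\ge 1/(\delta\|X(\phi_u(x))\|)$. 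Combining this with the displayed upper bound and with $\dist(z,\fix(\phi))\ge\|X(\phi_u(x))\|(1/B-\delta)$ yields $A\delta\ge 1/B-\delta$, contradicting $\delta<\delta_*\le 1/(B(A+1))$. This argument never needs to control $g(t)-t$, which is why the imaginary-eigenvalue issue simply does not arise. The estimate $\kappa(z)\dist(z,\fix(\phi))\le A$ is precisely the missing ingredient that would also make your blow-up work (it says the rescaled curvature stays bounded), so the gap in your proof is not a technicality but the absence of the main idea.
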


\begin{proof}
Take $B,C$ from Lemma \ref{lemRescTiempo}.
We start showing that there is $A>0$ such that 
 \begin{equation}
 \label{ecuRegA}
\kappa(z)\dist(z,\fix(\phi))\leq A
 \end{equation}
for all $z\in M\setminus\fix(\phi)$. 
From \eqref{ecuCurvatura} we see that 
 $$\kappa(z)\leq\frac{\|d_zX(X(z))\| \|X(z)\|}{\|X(z)\|^3}=\frac{\|d_zX(X(z))\| }{\|X(z)\|^2}\leq \frac{\|d_zX\| }{\|X(z)\|}.$$
As $X$ is $C^1$ and $M$ is compact there is $A_1>0$ such that $\|d_zX\|\leq A_1$ for all $z\in M$. 
Then, define $A=A_1C$ and \eqref{ecuRegA} follows from \eqref{ecuRegBC}.
As singularities are dynamically isolated there is $r_2>0$ such that if $x\in M\setminus\fix(\phi)$ then $\dist(\phi_t(x),\fix(\phi))> r_2$ for some $t\in\R$.

To prove that $\phi$ is efficient 
% define 
% $$r_1=\frac{1}{B(A+1)}$$
% % ,\frac{r_0}{3}\right\}$ where $r_0$ is given by Lemma \ref{lem23WW}. 
% and 
define
\[
 \delta_*=\min\left\{\frac{r_2}2,\frac1{B(A+1)},
 \min_{\dist(z,\fix(\phi))\geq \frac{r_2}2} \frac 1{\kappa(z)}.
 \right\}
\]
% and
% $\epsilon_0\in (0,r_2/2)$ such that 
% $\kappa(z)<1/\delta_*$ whenever $\dist(z,\fix(\phi))\geq r_2/2$.
% and 
% define $\delta_*=\min\{\epsilon_0,\frac{1}{B(A+1)}\}$.}

Suppose that 
$0<\delta<\delta_*$, $x\in M\setminus\fix(\phi)$, $g\in\rep$, 
\begin{equation}
\label{ecuAcompLem}
 \dist(\phi_t(x),\phi_{g(t)}(x))<\delta\|X(\phi_t(x))\|\text{ for all }t\in\R
\end{equation}
and
$\diam(\phi_{[t,g(t)]}(x))<\delta_*$ for all $t\in\R$.

Since fixed points are isolated and $x$ is not a fixed point there is $s\in\R$ such that $\dist(\phi_s(x),\fix(\phi))\geq r_2$.
As $\diam(\phi_{[s,g(s)]}(x))<\delta_*\leq r_2/2$ 
we have that $\dist(z,\fix(\phi))>r_2/2$ for all $z\in \phi_{[s,g(s)]}(x)$ 
and consequently $\kappa(z)<1/\delta_*$ for all $z\in \phi_{[s,g(s)]}(x)$. 
By Lemma \ref{lemDoCarmo} we have that $\diam(\phi_{[s,g(s)]}(x))=\dist(\phi_s(x),\phi_{g(s)}(x))$. 
From \eqref{ecuAcompLem} we conclude that $\phi_{[s,g(s)]}(x)\subset B(\phi_s(x),\delta\|X(\phi_s(x))\|)$. 
It remains to prove this inclusion for all $s\in\R$.

Arguing by contradiction, suppose that there are $u\in\R$ and $z\in \phi_{[u,g(u)]}(x)$ such that 
$\dist(\phi_u(x),z)=\delta\|X(\phi_u(x))\|$ and 
$\phi_{[t,g(t)]}(x)\subset B(\phi_t(x),\delta\|X(\phi_t(x))\|)$ (open ball) whenever $|t-s|<|u-s|$.
Applying \eqref{ecuRegBC} we obtain
\[
 \begin{array}{lll}
  \dist(z,\fix(\phi))&\geq &\dist(\phi_u(x),\fix(\phi))-\dist(\phi_u(x),z)\\
  &\geq & \frac{1}{B} \|X(\phi_u(x))\| -\delta\|X(\phi_u(x)\|\\
  &=&\|X(\phi_u(x))\|(\frac 1B-\delta).
 \end{array}
\]
By \eqref{ecuRegA} we have 
\[
 \frac{A}{\kappa(z)}\geq \dist(z,\fix(\phi))\geq \|X(\phi_u(x))\|(\frac 1B-\delta).
\]
Lemma \ref{lemDoCarmo} implies that $\kappa(z)\geq\frac 1{\delta\|X(\phi_u(x))\|}$.
Then $A\delta\geq \frac 1B-\delta$, contradicting that $\delta<\delta_*\leq\frac{1}{B(A+1)}$.
This contradiction proves that 
$\phi_{[t,g(t)]}(x)\subset B(\phi_t(x),\delta\|X(\phi_t(x))\|)$ for all $t\in\R$ and $\phi$ is efficient. 
\end{proof}

In \S \ref{secResNonEff} we will see an example of a non-efficient flow 
with a fixed point that is not dynamically isolated.

\begin{cor}
\label{corRescTiempo}
Let $\phi$ be a flow on a compact manifold $M$ with $C^1$ velocity field $X$. 
 If fixed points are hyperbolic
then $\phi$ is efficient.
\end{cor}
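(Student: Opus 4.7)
The plan is to deduce Corollary \ref{corRescTiempo} as an immediate consequence of Proposition \ref{propRescTiempo}. That proposition requires two hypotheses on each fixed point $\sigma\in\fix(\phi)$: that $d_\sigma X$ be invertible, and that $\sigma$ be dynamically isolated. I will verify both from the hyperbolicity assumption.

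First I would observe that hyperbolicity of $\sigma$ means, by definition, that the spectrum of the linearization $d_\sigma X$ is disjoint from the imaginary axis. In particular $0\notin\mathrm{spec}(d_\sigma X)$, so $d_\sigma X$ is a linear isomorphism of $T_\sigma M$. This gives the invertibility hypothesis of Proposition \ref{propRescTiempo} for free.

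Next I would argue that hyperbolic fixed points are dynamically isolated. This is a standard consequence of the local stable/unstable manifold theorem applied at $\sigma$: there exist local stable and unstable manifolds $W^s_{\mathrm{loc}}(\sigma)$ and $W^u_{\mathrm{loc}}(\sigma)$, both contained in a small neighborhood of $\sigma$, whose intersection is $\{\sigma\}$. A point $x$ whose full orbit $\phi_\R(x)$ stays inside a sufficiently small ball $B(\sigma,r)$ must have $\phi_t(x)\to\sigma$ as $t\to+\infty$ (forcing $x\in W^s_{\mathrm{loc}}(\sigma)$) and also as $t\to-\infty$ (forcing $x\in W^u_{\mathrm{loc}}(\sigma)$); hence $x=\sigma$. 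Thus $\sigma$ is dynamically isolated in the sense of \S \ref{secFixP}.

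With both hypotheses of Proposition \ref{propRescTiempo} in hand, efficiency of $\phi$ follows at once. The only non-routine step is the dynamical isolation, and since this is a classical corollary of the stable manifold theorem, the whole argument is essentially a citation. I expect no genuine obstacle; the real content sits in Proposition \ref{propRescTiempo}, and the corollary is just the most natural geometric instance of its hypotheses.
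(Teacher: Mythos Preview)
Your proposal is correct and follows exactly the paper's own route: the paper's proof simply cites Proposition \ref{propRescTiempo} and notes that hyperbolic fixed points are dynamically isolated and have invertible linearization. Your write-up just spells out these two well-known facts in slightly more detail.
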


\begin{proof}
It follows by Proposition \ref{propRescTiempo} because hyperbolic fixed points are
dynamically isolated and 
$d_\sigma X$ is invertible for every fixed point $\sigma$.
\end{proof}

\begin{rmk}
 A flow may have a fixed point $\sigma$ being dynamically isolated, non-hyperbolic and with $d_\sigma X$ invertible. 
 As an example consider $$X(x,y)=(y+x^3,-x+y^3)$$ 
 with $\sigma=(0,0)$. Thus, Proposition \ref{propRescTiempo} is strictly stronger than 
 Corollary \ref{corRescTiempo}.
\end{rmk}

\subsection{Rescaling expansivity}
\label{secRescaling}
For the next definition we will assume that $M$ is a Riemannian manifold 
with a flow $\phi$ having a $C^1$ velocity field $X$
and $\Lambda\subset M$ will denote a compact invariant subset.
% The restriction of $\phi$ to $\Lambda$ will be denoted as $\phi|_\Lambda$. 

\begin{df}[\cite{WW}] 
The flow $\phi|_\Lambda$ is \emph{rescaling expansive} if for all $\epsilon>0$ 
there is $\delta>0$ such that if 
\begin{equation}
\label{ecuAcompResc}
x,y\in\Lambda, h\in\repo\text{ and }\dist(\phi_t(x),\phi_{h(t)}(y))\leq\delta\|X(\phi_t(x))\|\text{ for all }t\in\R,  
\end{equation}
% then $y=\phi_s(x)$ for some $s\in(-\epsilon,\epsilon)$.
then $\phi_{h(t)}(y)\in\phi_{[-\epsilon,\epsilon]}(\phi_t(x))$ for all $t\in\R$.\footnote{ In \cite[Definition 1.1]{WW}, the definition of rescaled expansivity, 
$h$ is assumed to be 
 increasing and continuous but not surjective. However, as proved in  
 \cite[Corollary 2.6]{WW}, both forms are equivalent.}
% $\phi_{h(t)}(y)\in\phi_{[-\epsilon,\epsilon]}(\phi_t(x))$ for all $t\in\R$.
\end{df}

% \begin{prop}
% If the flow $\phi|_\Lambda$ is rescaling expansive then for all $\epsilon>0$ 
% there is $\delta>0$ such that \eqref{ecuAcompResc} implies
% $\phi_{h(t)}(y)\in\phi_{[-\epsilon,\epsilon]}(\phi_t(x))$ for all $t\in\R$
% and
% $|h(t)|\leq\epsilon$ for all $t\in\R$.
% \end{prop}

% \begin{rmk}
%  In \cite[Definition 1.1]{WW}, the definition of rescaling expansivity, $h$ is only assumed to be 
%  increasing and continuous (not surjective). As proved in  
%  \cite[Corollary 2.6]{WW}, both forms are equivalent.
% \end{rmk}

% The next example shows that $\fix(\phi)$ may be infinite for rescaling expansivity. 
% Also, it proves that rescaling expansivity does not imply $k^*$-expansivity.
% See Example \ref{exaToroIrr} for an example with a finite number of fixed points.

The definition of rescaled expansivity was designed to accompany multisingular hyperbolicity \cite{WW}. 
The next example shows that without extra hypothesis the definition by itself is \emph{too much generous} with fixed points.

\begin{exa}
\label{exaNull}
Let $X=0$ be the vector field on a compact manifold $M$ without regular points. 
To prove that it is rescaling expansive, given any $\epsilon>0$ take $\delta=1$. 
If $\dist(\phi_t(x),\phi_{h(t)}(y))\leq\delta\|X(\phi_t(x))\|=0$ then $x=y$ (a fixed point). 
\end{exa}

\begin{rmk}
The Example \ref{exaNull} shows that a rescaling expansive flow may not be $k^*$-expansive, in fact may not even be separating (Remark \ref{rmkSepFix}).
Also, notice that the flow of Example \ref{exaAnPer} is separating but it is not rescaling expansive (because rescaled expansivity without fixed points is equivalent to BW-expansivity and 
compact surfaces admits no BW-expansive flows).
\end{rmk}

% Also, $\fix(\phi)=\{p\in M:\phi_t(p)=p \text{ for all }t\in\R\}$.
% The parameter $r_0$ is given by \cite[Proposition 2.2]{WW}.

\begin{rmk}
 The definition of rescaled expansivity is independent of the Riemannian metric. 
 Therefore, without loss of generality we will assume that $M\subset \R^d$ and on $TM$ we consider the inner product of $\R^d$.
\end{rmk}

For our next theorem, a characterization of rescaled expansivity, we need the following result.

\begin{lem}[\cite{WW}]
\label{lem23WW}
If $\phi$ is a flow with $C^1$ velocity field $X$ on a compact Riemannian manifold $M$ then there is $r_0>0$ such that 
if $x\in M\setminus\fix(\phi)$, $0<\delta<r_0$ and $\phi_{[0,t]}(x)\subset B(x,\delta\|X(x)\|)$ then $|t|<3\delta$.
\end{lem}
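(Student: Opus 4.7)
The plan is to exploit the Lipschitz continuity of $X$, which is available because $X$ is $C^1$ on the compact manifold $M$. Let $L$ denote a Lipschitz constant for $X$ on $M$, and set $r_0 = 2/(3L)$. Fix $x \notin \fix(\phi)$ and $0 < \delta < r_0$, and assume $\phi_{[0,t]}(x) \subset B(x, \delta \|X(x)\|)$. I will treat $t > 0$ first and handle $t < 0$ afterwards by a symmetric argument on the reversed flow.

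The guiding intuition is that on the small ball $B(x, \delta\|X(x)\|)$ the vector field is nearly equal to the constant $X(x)$, so the orbit is close to a straight line segment in the direction of $X(x)$, and a segment of speed $\approx \|X(x)\|$ cannot remain inside a ball of radius $\delta\|X(x)\|$ for time much exceeding $\delta$. To make this precise I would introduce the scalar function
\[
f(s) = \bigl\langle \phi_s(x) - x,\ X(x)\bigr\rangle, \qquad s\in[0,t],
\]
so that $f(0)=0$ and $f'(s) = \langle X(\phi_s(x)), X(x)\rangle$. The Lipschitz bound $\|X(\phi_s(x)) - X(x)\| \leq L\,\dist(\phi_s(x), x) \leq L\delta\|X(x)\|$ combined with Cauchy--Schwarz gives
\[
f'(s) \geq \|X(x)\|^2 - L\delta\|X(x)\|^2 = \|X(x)\|^2(1-L\delta),
\]
while the containment hypothesis gives $f(s) \leq \|\phi_s(x) - x\|\,\|X(x)\| \leq \delta\|X(x)\|^2$.

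Integrating $f'$ from $0$ to $t$ and using $f(0)=0$ yields
\[
t\,\|X(x)\|^2(1-L\delta) \leq f(t) \leq \delta\,\|X(x)\|^2,
\]
so $t \leq \delta/(1-L\delta)$. The choice $\delta < r_0 = 2/(3L)$ forces $1-L\delta > 1/3$, hence $t < 3\delta$. For $t<0$ the identical argument applied to the flow of $-X$ (which has the same Lipschitz constant) gives $|t| < 3\delta$ in general.

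There is no real obstacle here: the entire argument is a one-line projection trick combined with the Lipschitz continuity of $X$. The only small subtlety is that the constant $r_0$ must be chosen sharply enough so that the bound $\delta/(1-L\delta)$ fits under the prescribed value $3\delta$, which dictates $r_0 = 2/(3L)$ (or anything smaller).
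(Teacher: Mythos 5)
The paper does not prove this lemma itself---it simply cites \cite{WW}*{Lemma 2.3}---but your argument is correct and is essentially the standard estimate behind that lemma: project the displacement $\phi_s(x)-x$ onto $X(x)$, use the Lipschitz bound $\|X(\phi_s(x))-X(x)\|\le L\delta\|X(x)\|$ on the ball to get $f'(s)\ge(1-L\delta)\|X(x)\|^2$, and compare with the trivial upper bound $f(t)\le\delta\|X(x)\|^2$. The choice $r_0=2/(3L)$ (which makes $1-L\delta>1/3$) and the treatment of $t<0$ via the reversed flow are both fine.
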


See \cite[Lemma 2.3]{WW} for a proof of Lemma \ref{lem23WW}.

\begin{rmk}
By \cite[Theorem 3]{BodL} we have that every multisingular hyperbolic flow is a star flow, and in particular, 
the fixed points are hyperbolic and dynamically isolated.
From this viewpoint, it is natural to assume that fixed points are dynamically isolated. 
\end{rmk}

\begin{thm}
\label{thmResc}
Suppose that $\Lambda\subset M$ is an invariant compact subset and that the fixed points are dynamically isolated. 
If $\phi|_\Lambda$ is rescaling expansive then:
\begin{itemize}
\item[(*)] for all $\beta>0$ 
there is $\delta>0$ such that 
if $x,y\in\Lambda$, $h\in\repo$ and $\dist(\phi_t(x),\phi_{h(t)}(y))\leq\delta\|X(\phi_t(x))\|$ for all $t\in\R$
then
there is $g\in\rep$ such that $\phi_{h(t)}(y)=\phi_{g(t)}(x)$ for all $t\in\R$ and 
$\diam(\phi_{[t,g(t)]}(x))<\beta$ for all $t\in\R$.
\end{itemize}
% $y=\phi_u(x)$ for some $u\in\R$ and $\diam(\phi_{[0,u]}(x))<\beta$. 
The converse is true if the flow is efficient.
\end{thm}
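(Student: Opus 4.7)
The plan is to prove each direction in turn. For the direct implication, rescaling expansivity supplies the pointwise containment $\phi_{h(t)}(y)\in\phi_{[-\epsilon,\epsilon]}(\phi_t(x))$ in a short arc around $\phi_t(x)$, and Lemma~\ref{lemRepKomExp} is what upgrades this into a bona fide reparametrization $g\in\rep$ with the required diameter bound. For the converse, efficiency is the essential new ingredient: it promotes the diameter bound provided by (*) into the rescaled neighborhood bound $\phi_{[t,g(t)]}(x)\subset B(\phi_t(x),\delta\|X(\phi_t(x))\|)$, after which Lemma~\ref{lem23WW} converts this into $|g(t)-t|<3\delta$, which is precisely the rescaling expansivity conclusion.

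For the direct part, fix $\beta>0$. Since fixed points are dynamically isolated, Lemma~\ref{lemIsoBetaCero} supplies $\beta_0>0$ with $\diam(\phi_\R(z))\geq\beta_0$ whenever $z\notin\fix(\phi)$. I would choose $\epsilon>0$ so small that $\epsilon\sup_\Lambda\|X\|<\min(\beta,\beta_0/2)$, and let $\delta>0$ be the rescaling expansivity constant for this $\epsilon$. Suppose $(x,y,h)$ satisfies the rescaled tracking with this $\delta$. If $x\in\fix(\phi)$ then $X(\phi_t(x))\equiv 0$ forces $y=x$ and $g=\mathrm{id}$ works; otherwise, applying rescaling expansivity at $t=0$ gives $u_0\in[-\epsilon,\epsilon]$ with $y=\phi_{u_0}(x)$, so setting $h'(t)=h(t)+u_0\in\rep$ we have $\phi_{h'(t)}(x)=\phi_{h(t)}(y)$ for all $t$. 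For each $t$, pick a pre-image $u(t)\in[t-\epsilon,t+\epsilon]$ with $\phi_{u(t)}(x)=\phi_{h'(t)}(x)$; the arc $\phi_{[t,u(t)]}(x)$ has diameter at most $\epsilon\sup_\Lambda\|X\|<\min(\beta,\beta_0/2)\leq\diam(\phi_\R(x))/2$, so Lemma~\ref{lemRepKomExp} produces the desired $g\in\rep$.

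For the converse, assume $\phi|_\Lambda$ is efficient with constant $\delta_*$, and fix $\epsilon>0$. Let $r_0$ come from Lemma~\ref{lem23WW}, apply (*) with $\beta=\delta_*$ to obtain $\delta_1>0$, and choose $\delta>0$ small enough that $\delta<\delta_1$, $2\delta<\min(\delta_*,r_0)$ and $6\delta\leq\epsilon$. Given rescaled tracking with this $\delta$, the case $x\in\fix(\phi)$ is handled as before. Otherwise, (*) yields $g\in\rep$ with $\phi_{h(t)}(y)=\phi_{g(t)}(x)$ and $\diam(\phi_{[t,g(t)]}(x))<\delta_*$; since $\|X(\phi_t(x))\|>0$ along the orbit of $x$, we have $\dist(\phi_t(x),\phi_{g(t)}(x))\leq\delta\|X(\phi_t(x))\|<2\delta\|X(\phi_t(x))\|$, so efficiency (with $2\delta$ playing the role of its $\delta$) gives $\phi_{[t,g(t)]}(x)\subset B(\phi_t(x),2\delta\|X(\phi_t(x))\|)$. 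Lemma~\ref{lem23WW} then forces $|g(t)-t|<6\delta\leq\epsilon$, as required.

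I expect the principal obstacle to lie in the direct part, specifically in constructing $g\in\rep$ when $x$ is periodic: the rescaling expansivity conclusion only says $\phi_{h(t)}(y)$ lies in a small arc of $\phi_\R(x)$, and the pre-image under $s\mapsto\phi_s(x)$ is multi-valued, so a naive pointwise choice need not yield a continuous, let alone surjective, $g$. Lemma~\ref{lemRepKomExp} is tailored to precisely this issue: it uses the sub-half-period diameter bound to force any continuous selection to differ from $h'$ by a single integer multiple of the period, producing a genuine increasing homeomorphism.
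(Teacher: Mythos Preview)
Your proposal is correct and follows essentially the same route as the paper: both directions hinge on Lemma~\ref{lemRepKomExp} for the forward implication and on efficiency plus Lemma~\ref{lem23WW} for the converse. Your argument is in fact slightly more careful than the paper's in two places: you make the passage from $y$ to $h'(t)=h(t)+u_0$ explicit before invoking Lemma~\ref{lemRepKomExp}, and in the converse you double $\delta$ to convert the non-strict tracking inequality into the strict one required by Definition~\ref{dfEfficient}, a point the paper glosses over.
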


\begin{proof}
\emph{Direct}. 
Consider $\beta_0$ given by Lemma \ref{lemIsoBetaCero}.
Suppose that $\beta\in (0,\beta_0/2)$ is given. Since $\Lambda$ is compact there is $\epsilon>0$ such that $\diam(\phi_{[-\epsilon,\epsilon]}(x))<\beta$ for all $x\in\Lambda$. 
The definition of rescaled expansivity implies that there is $\delta>0$ 
% from the definition of rescaling expansivity. 
such that if $x,y\in\Lambda$, $h\in\repo$ and
$\dist(\phi_t(x),\phi_{h(t)}(y))\leq\delta\|X(\phi_t(x))\|$ for all $t\in\R$ then 
$\phi_{h(t)}(y)\in\phi_{[-\epsilon,\epsilon]}(\phi_t(x))$ for all $t\in\R$.
Thus 
$$\diam(\phi_{[-\epsilon,\epsilon]}(\phi_t(x)))<\beta<\beta_0/2<\diam(\phi_\R(x))/2$$
for all $t\in\R$. 
% % $y\in\phi_{[-\epsilon,\epsilon]}(x)$. 
% Thus, there is $u\in\R$ ($u\in[-\epsilon,\epsilon]$) such that 
% $y=\phi_u(x)$ and 
% $\diam(\phi_{[0,u]})<\beta$. 
% \aca\rojo{falta probar cosas}, creo que se usaría el
By Lemma \ref{lemRepKomExp},
% Suppose that $x\notin\fix(\phi)$, $u\colon\R\to\R$ a function, $h\in\rep$,
% \begin{eqnarray}
% \sup_{t\in\R}\diam(\phi_{[t,u(t)]}(x))<\diam(\phi_\R(x))/2, \\
% \phi_{h(t)}(x)\in\phi_{[t,u(t)]}(x)\,\forall t\in\R.
% \end{eqnarray}
there is $g\in\rep$ 
such that
$g(t)\in [t,u(t)]$ for all $t\in\R$ 
and $\phi_{g(t)}(x)=\phi_{h(t)}(x)$ for all $t\in\R$.

% If $\Lambda\subset M$ is invariant we say that $\phi|_\Lambda$ is \emph{efficient} if there are $\epsilon_0,r_1>0$ such that if 
%  $0<\delta<r_1$, $x\in \Lambda\setminus\fix(\phi)$, $g\in\rep$ satisfy:
%  \begin{itemize}
%   \item $\dist(\phi_t(x),\phi_{g(t)}(x))<\delta\|X(\phi_t(x))\|$ for all $t\in\R$ and
%   \item $\diam(\phi_{[t,g(t)]}(x))<\epsilon_0$ for all $t\in\R$
%  \end{itemize}
% then $\phi_{[t,g(t)]}(x)\subset B(\phi_t(x),\delta\|X(\phi_t(x))\|)$ for all $t\in\R$.
% 

\emph{Converse}. 
To prove that $\phi$ is rescaling expansive consider $\epsilon>0$ given. 
Take $r_0$ from Lemma \ref{lem23WW}, 
$\delta_*$ from Definition \ref{dfEfficient} (efficient flow)
and define 
$$\beta=\min\{\epsilon,\delta_*,r_0\}.$$ 
From the hypothesis (*), take the corresponding value of $\delta$ associated to $\beta$. 
Let $\delta_0=\min\{\delta,\epsilon/3,r_0,\delta_*\}$ and suppose that  
$x,y\in\Lambda$, $h\in\repo$ and 
$$\dist(\phi_t(x),\phi_{h(t)}(y))\leq\delta_0\|X(\phi_t(x))\|$$ 
for all $t\in\R$. 
By (*) we know that there is $g\in\rep$ such that $\phi_{h(t)}(y)=\phi_{g(t)}(x)$ for all $t\in\R$ and 
$\diam(\phi_{[t,g(t)]}(x))<\beta$ for all $t\in\R$.
As the flow is efficient we have that 
$\phi_{[t,g(t)]}(x)\subset B(\phi_t(x),\delta_0\|X(\phi_t(x))\|)$ for all $t\in\R$.
% Since $\epsilon<\epsilon_0$, by \eqref{ecuCondDiam} we have that $\diam(\phi_{[0,u]}(x))<L\dist(x,y)$. 
% 
% As $\dist(x,y)\leq\delta\|X(x)\|$, we conclude that $\diam(\phi_{[0,u]}(x))<L\delta\|X(x)\|$ 
% and consequently $\phi_{[0,u]}(x)\subset B(x,L\delta\|X(x)\|)$. 
By Lemma \ref{lem23WW} we have that $|g(t)-t|<3\delta_0\leq \epsilon$.
This implies that $\phi_{g(t)}(x)=\phi_{h(t)}(y)\in\phi_{[-\epsilon,\epsilon]}(\phi_t(x))$ for all $t\in\R$.
Then, $\phi$ is rescaling expansive.
\end{proof}

\subsection{A rescaled metric}
\label{secRescMetric}

Consider a Riemannian manifold $(M,\left<\cdot,\cdot\right>)$.
Let $\phi$ be a flow on $M$ with velocity field $X$. 
Given a subset $\Lambda\subset M$ define $\Lambda^*=\Lambda\setminus\fix(\phi)$.
Define the \emph{rescaled Riemannian metric} $\left<\cdot,\cdot\right>_r$ on $M^*$ as
\[
 \left<v,w\right>_r=\frac{\left<v,w\right>}{\left<X(p),X(p)\right>}
\]
where $v,w\in T_pM$.
Assuming that $M^*$ is connected, denote by $\dist_r$ the distance on $M^*$ induced by $\left<\cdot,\cdot\right>_r$.

\begin{thm}
\label{thmBWexprResc}
If $d_\sigma X$ is invertible for all $\sigma\in\fix(\phi)$ 
and $\phi|_{\Lambda^*}$ is BW-expansive with respect to $\dist_r$ then $\phi|_\Lambda$ is rescaling expansive.
\end{thm}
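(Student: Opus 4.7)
The approach will be to use Lemma \ref{lemRescTiempo}, whose hypothesis is precisely the invertibility of $d_\sigma X$ at fixed points, to convert the ambient rescaling inequality $\dist(\phi_t(x),\phi_{h(t)}(y))\le\delta\|X(\phi_t(x))\|$ into a uniform upper bound on the rescaled distance $\dist_r(\phi_t(x),\phi_{h(t)}(y))$, and then apply BW-expansivity of $\phi|_{\Lambda^*}$ with respect to $\dist_r$ to conclude.

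First I will dispose of the cases in which a fixed point is involved. If $x\in\fix(\phi)$, then $\|X(\phi_t(x))\|=0$ forces $\phi_{h(t)}(y)=x$ for every $t\in\R$, so $y=x$ and the conclusion is immediate. If instead $x\notin\fix(\phi)$ but $y\in\fix(\phi)$, then $\phi_{h(t)}(y)=y$, and Lemma \ref{lemRescTiempo} gives $\|X(\phi_t(x))\|\le B\,\dist(\phi_t(x),\fix(\phi))\le B\,\dist(\phi_t(x),y)$; the hypothesis now reads $\dist(\phi_t(x),y)\le\delta B\,\dist(\phi_t(x),y)$, which for $\delta<1/B$ forces $\phi_t(x)=y$ for all $t$, contradicting that $x$ is regular. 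Hence I may assume $x,y\in\Lambda^*$.

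The key step is the pointwise metric comparison: there exist constants $K_0,\delta_0>0$ such that whenever $p\in M^*$ and $q\in M$ satisfy $\dist(p,q)\le\delta\|X(p)\|$ with $\delta<\delta_0$, one has $q\in M^*$ and $\dist_r(p,q)\le K_0\delta$. With $B,C$ from Lemma \ref{lemRescTiempo} and $\delta<1/(2B)$, the ball $B(p,\delta\|X(p)\|)$ is disjoint from $\fix(\phi)$, and every $z$ inside it satisfies $\|X(z)\|\ge\|X(p)\|(1/B-\delta)/C$. Since $M$ is a compact $C^2$ submanifold of $\R^d$, for $p$ close enough to $q$ there is a smooth path $\gamma$ in $M$ from $p$ to $q$, lying inside that ball, whose length in $M$ is at most $2\,\dist(p,q)$. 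Integrating the rescaled arclength along $\gamma$ yields
\[
\dist_r(p,q)\le \frac{2\,\dist(p,q)}{\|X(p)\|(1/B-\delta)/C}\le \frac{2C\delta}{1/B-\delta}\le K_0\delta.
\]

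Given $\epsilon>0$, I then apply BW-expansivity of $\phi|_{\Lambda^*}$ in $\dist_r$ to obtain $\eta>0$ with the corresponding property, and choose $\delta\in(0,\delta_0)$ with $K_0\delta<\eta$. For $x,y\in\Lambda$ satisfying the rescaling hypothesis with this $\delta$, the previous paragraph reduces matters to $x,y\in\Lambda^*$, and applying the comparison at every $t$ gives $\dist_r(\phi_t(x),\phi_{h(t)}(y))\le K_0\delta<\eta$. BW-expansivity then delivers $\phi_{h(t)}(y)\in\phi_{[-\epsilon,\epsilon]}(\phi_t(x))$ for all $t$, as required. The hard part is the metric comparison itself: one needs a concrete path in $M^*$ on which to bound the integral of $1/\|X\|$, ensuring both that it avoids $\fix(\phi)$ and that $\|X\|$ along it remains comparable to $\|X(p)\|$. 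Lemma \ref{lemRescTiempo} supplies the two-sided control of $\|X\|$ in terms of the distance to $\fix(\phi)$, while the $C^2$ embedding $M\subset\R^d$ furnishes the short path; the interplay of these two ingredients is the non-routine point.
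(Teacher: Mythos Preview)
Your proof is correct and follows essentially the same route as the paper's own argument: use Lemma~\ref{lemRescTiempo} to obtain the two-sided bound $\tfrac{1}{C}\dist(z,\fix(\phi))\le\|X(z)\|\le B\dist(z,\fix(\phi))$, derive from it a lower bound $\|X(z)\|\ge\|X(p)\|(1/B-\delta)/C$ for $z$ in the ball $B(p,\delta\|X(p)\|)$, integrate $1/\|X\|$ along a short path from $p$ to $q$ to bound $\dist_r(p,q)$, and feed this into the definition of BW-expansivity. The paper carries this out with a geodesic of the original Riemannian metric (so the path has length exactly $\dist(p,q)$ and automatically stays in the ball), obtaining $\dist_r(p,q)\le BC\rho/(1-\rho B)$; your version uses an arbitrary short path of length $\le 2\dist(p,q)$ coming from the $C^2$ embedding, and adds an explicit treatment of the cases $x\in\fix(\phi)$ and $y\in\fix(\phi)$, which the paper leaves implicit. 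One small slip: a path of length up to $2\delta\|X(p)\|$ need not lie in the ball of radius $\delta\|X(p)\|$; you should either take the Riemannian geodesic (as the paper does) or enlarge the ball to radius $2\delta\|X(p)\|$ and adjust the constants accordingly, which changes nothing of substance.
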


\begin{proof}
Given $\epsilon>0$ consider $\delta>0$ from the definition of BW-expansivity. 
By Lemma \ref{lemRescTiempo}
there are constants $B,C>0$ such that
% \eqref{ecuRegBC}
\[
 \frac 1C\dist(z,\fix(\phi))\leq\|X(z)\|\leq B\dist(z,\fix(\phi))
\]
for all $z\in M$. 
Take $\rho>0$ such that $BC\rho/(1-\rho B)<\delta$.

We will show that 
\begin{equation}
 \label{eqDistrrhodelta}
\text{ if }\dist(p,q)\leq \rho\|X(p)\|\text{ then }\dist_r(p,q)\leq\delta. 
\end{equation}
Suppose that $\dist(p,q)\leq\rho\|X(p)\|$. Then
\[
\frac 1B\|X(p)\|-\rho\|X(p)\|\leq\dist(p,\fix(\phi))-\dist(p,q)\leq\dist(q,\fix(\phi))\leq C\|X(q)\|
\]
and, if $k=C(\frac 1B-\rho)^{-1}$ then
\[
\|X(p)\|\leq k\|X(q)\|
\]
Fix $a$ such that $\dist(p,a)\leq\rho\|X(p)\|$
and take a geodesic arc $\gamma$ (of the original metric of $M$) from $p$ to $a$, with $\|\dot\gamma\|=1$. 
Then 
\[
\begin{array}{ll}
 \dist_r(p,a)&\displaystyle\leq\int_0^{\dist(p,a)}\frac{\|\dot\gamma(t)\|}{\|X(\gamma(t))\|}dt=\int_0^{\dist(p,a)}\frac{1}{\|X(\gamma(t))\|}dt\\
             &\displaystyle\leq\int_0^{\dist(p,a)}\frac{k}{\|X(p)\|}dt=\frac{k\dist(p,a)}{\|X(p)\|}\\
             &\displaystyle\leq\frac{k\rho\|X(p)\|}{\|X(p)\|}=k\rho=\frac{BC\rho}{1-\rho B}<\delta.
\end{array}
\]
This proves \eqref{eqDistrrhodelta}.

To prove that $\phi|_\Lambda$ is rescaling expansive take $x,y\in\Lambda$ and $h\in\repo$ such that 
$\dist(\phi_t(x),\phi_{h(t)}(y))\leq \rho\|X(\phi_t(x))\|$ for all $t\in\R$.
Then $\dist(\phi_t(x),\phi_{h(t)}(y))\leq \delta$ for all $t\in\R$ and by the BW-expansivity of $\phi$ 
we have that $\phi_{h(t)}(y)\in\phi_{[-\epsilon,\epsilon]}(\phi_t(x))$ for all $t\in\R$, and the proof ends.
\end{proof}

Conversely, it would be interesting to know under what conditions 
the fact that $\phi|_\Lambda$ is rescaling expansive implies
that $\phi|_{\Lambda^*}$ is BW-expansive with respect to $\dist_r$.

\subsection{A rescaling expansive flow that is not efficient.}
\label{secResNonEff}
 On $\R^2$ consider the flow $\phi$ with velocity field $X(x,y)=(-y,x)$. 
 Define 
 \[
  \Lambda=\{p\in\R^2:\|p\|=e^{-n}, n\in\Z^+\}\cup\{(0,0)\}.
 \]
The next results are about this particular flow $\phi$ restricted to this compact invariant set $\Lambda$.
\begin{prop}
The flow $\phi|_\Lambda$ is rescaling expansive.
\end{prop}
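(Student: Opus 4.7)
The plan is to exploit the very explicit structure of $\phi|_\Lambda$: the vector field $X(x,y)=(-y,x)$ generates rigid rotation around the origin at angular speed $1$, so $\Lambda$ decomposes as the origin (a fixed point) together with the invariant circles $C_n = \{p:\|p\|=e^{-n}\}$, $n\in\Z^+$, with $\|X(p)\|=e^{-n}$ on $C_n$. Given $\epsilon\in(0,\pi)$, I would set
\[
\delta < \min\bigl\{2\sin(\epsilon/2),\ 1-e^{-1}\bigr\}
\]
and verify the rescaled expansivity condition for this $\delta$.

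The first step is to rule out configurations where $x$ and $y$ lie on different orbits. If $x=(0,0)$ then $\|X(\phi_t(x))\|\equiv 0$ forces $\phi_{h(t)}(y)=x$ for all $t$, so $y\in\phi_\R(x)$ trivially. If $x\in C_n$ while either $y=(0,0)$ or $y\in C_m$ with $m\neq n$, concentricity gives the uniform lower bound $\dist(\phi_t(x),\phi_{h(t)}(y))\geq |e^{-n}-e^{-m}|$ (with the convention $e^{-\infty}:=0$), independently of $t$. Dividing by $\|X(\phi_t(x))\|=e^{-n}$ yields $|1-e^{n-m}|\leq \delta$, contradicting $\delta<1-e^{-1}=\min_{m\neq n}|1-e^{n-m}|$. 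So $x$ and $y$ must live on a common circle $C_n$.

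On $C_n$, I would write $y=\phi_s(x)$ with $s\in(-\pi,\pi]$ and use the chord-length formula for a circle of radius $e^{-n}$; the rescaled hypothesis then becomes
\[
\bigl|\sin\bigl(\tfrac{1}{2}(h(t)-t+s)\bigr)\bigr|\leq \delta/2\quad\text{for all }t\in\R.
\]
The level set $\{u:|\sin(u/2)|\leq \delta/2\}$ is a disjoint union of intervals of radius $2\arcsin(\delta/2)$ centered at the points $2\pi k$ (disjointness is guaranteed by $\delta<1$). The continuous map $t\mapsto h(t)-t+s$ takes the value $s$ at $t=0$; applying the inequality at $t=0$ forces $|s|\leq 2\arcsin(\delta/2)$, placing it in the $k=0$ component. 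By connectedness it remains in that component for all $t$, so $|h(t)-t+s|\leq 2\arcsin(\delta/2)\leq \epsilon$, giving $\phi_{h(t)}(y)=\phi_{t+(h(t)-t+s)}(x)\in\phi_{[-\epsilon,\epsilon]}(\phi_t(x))$, as required.

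I do not anticipate a serious obstacle, since the flow is essentially one-dimensional on each connected piece of $\Lambda$. The only delicate moment is the connectedness step: one must ensure $\delta$ is small enough that $h(t)-t+s$ cannot jump from the interval around $0$ to one around $\pm 2\pi$, and this is precisely what the choice $\delta<2\sin(\epsilon/2)$ (together with $s\in(-\pi,\pi]$) guarantees. Everything else follows from the uniform angular speed and from the multiplicative gap $e^{-1}$ between the radii of successive circles.
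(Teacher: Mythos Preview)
Your argument is correct and complete. The reduction to same-circle pairs via the uniform gap $1-e^{-1}$, the chord-length computation, and the connectedness argument trapping $h(t)-t+s$ in the component around $0$ all go through as written; the restriction $\epsilon\in(0,\pi)$ is harmless since the condition is monotone in $\epsilon$.

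The paper proceeds quite differently. Rather than computing directly, it passes to the rescaled Riemannian metric $\left<\cdot,\cdot\right>_r=\left<\cdot,\cdot\right>/\|X\|^2$ on $\R^2\setminus\{(0,0)\}$ and observes that the map $\varphi(r,\theta)=e^r(\cos\theta,\sin\theta)$ is an isometry from the flat cylinder $\R\times(\R/\Z)$ onto this rescaled space; under $\varphi^{-1}$ the set $\Lambda\setminus\{(0,0)\}$ becomes a family of equidistant horizontal circles, on which the flow is trivially BW-expansive. Since $d_{(0,0)}X$ is invertible, Theorem~\ref{thmBWexprResc} then yields rescaling expansivity. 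Your approach is more elementary and entirely self-contained, avoiding the machinery of the rescaled metric; the paper's approach, on the other hand, illustrates how the general theorem about BW-expansivity in the rescaled metric can be applied in practice, and makes transparent why the multiplicative spacing $e^{-n}$ of the radii is exactly what is needed (it becomes additive, hence uniform, after the logarithmic change of scale).
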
 

\begin{proof}
Consider the cylinder $N=\R\times(\R/\Z)$ with the product Riemannian metric. 
Define $\varphi\colon N\to\R^2\setminus\{(0,0)\}$ as 
$\varphi(r,\theta)=e^r(\cos\theta,\sin\theta)$. 
It is a diffeomorphism, moreover, if we consider the rescaled metric \S \ref{secRescMetric} associated to $X$ 
on $\R^2\setminus\{(0,0)\}$ we have that 
$\varphi$ is an isometry. 
With respect to the rescaled metric, $\Lambda\setminus\{(0,0)\}$ is a countable union of equidistant circles. 
Since each circle is a periodic orbit, we have that $\phi|_\Lambda$ is BW-expansive.
As $d_{(0,0)}X$ is invertible, we can apply Theorem \ref{thmBWexprResc} to conclude that $\phi|_\Lambda$ is rescaling expansive.
\end{proof}

\begin{prop}
 The flow $\phi|_\Lambda$ is not efficient.  
\end{prop}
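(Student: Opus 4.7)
The plan is to produce, for every candidate $\delta_*>0$, a choice of $\delta$, $x$, and $g$ that satisfies both bullet points of Definition \ref{dfEfficient} but violates the conclusion. The key observation is that for the field $X(x,y)=(-y,x)$ one has $\|X(p)\|=\|p\|$, so on the circle of radius $r=e^{-n}$ one has $\|X\|\equiv e^{-n}$ while the whole periodic orbit has diameter $2e^{-n}$. Therefore, if we compare a point on this circle with itself after one full period, the distance is zero (so the rescaled closeness is trivial) but the orbit arc sweeps out the whole circle.

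First I would fix any candidate $\delta_*>0$. Pick $n\in\Z^+$ large enough that $2e^{-n}<\delta_*$, pick any $x\in\Lambda$ with $\|x\|=e^{-n}$, and choose $\delta\in(0,\min\{\delta_*,1/2\})$. Define $g\in\rep$ by $g(t)=t+2\pi$, which is an increasing homeomorphism of $\R$. Since $2\pi$ is a period of $\phi$ on this circle, $\phi_{g(t)}(x)=\phi_t(x)$ for all $t$, and both hypotheses in Definition \ref{dfEfficient} hold:
\begin{itemize}
 \item $\dist(\phi_t(x),\phi_{g(t)}(x))=0<\delta\,e^{-n}=\delta\|X(\phi_t(x))\|$,
 \item $\diam(\phi_{[t,g(t)]}(x))=2e^{-n}<\delta_*$.
\end{itemize}

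Now I would verify that the conclusion of efficiency fails. The arc $\phi_{[t,g(t)]}(x)$ is the entire circle of radius $e^{-n}$, whose diameter is $2e^{-n}$. Any ball containing it must have radius at least $e^{-n}$. But the target ball $B(\phi_t(x),\delta\|X(\phi_t(x))\|)=B(\phi_t(x),\delta e^{-n})$ has radius $\delta e^{-n}<e^{-n}/2$, so for instance the antipodal point $-\phi_t(x)$ lies on the orbit but outside this ball. Hence the inclusion $\phi_{[t,g(t)]}(x)\subset B(\phi_t(x),\delta\|X(\phi_t(x))\|)$ fails.

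Since this construction works for every $\delta_*>0$, no $\delta_*$ witnesses Definition \ref{dfEfficient}, and so $\phi|_\Lambda$ is not efficient. There is no real obstacle here; the only thing to be careful about is that the counterexample $g(t)=t+2\pi$ indeed lies in $\rep$ (not just $\repo$) and that $\Lambda$ contains circles of arbitrarily small radius, which is exactly the role of the sequence $e^{-n}\to 0$ accumulating on the non-isolated fixed point $(0,0)$.
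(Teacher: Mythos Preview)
Your proof is correct and follows essentially the same idea as the paper's: take $g(t)=t+2\pi$ so that $\phi_{g(t)}(x)=\phi_t(x)$, making the first bullet trivially satisfied, while the arc $\phi_{[t,g(t)]}(x)$ is the full circle and hence cannot fit in the small rescaled ball. Your version is in fact a bit cleaner than the paper's, since you separate the radius $e^{-n}$ of the chosen circle from the parameter $\delta$, whereas the paper uses the same symbol $\delta$ for both (taking $x=(\delta,0)$), which implicitly forces $\delta$ to be one of the values $e^{-n}$.
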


% \aca Aca repito la definición de eficiencia. Parece que $\epsilon_0$ y $r_1$ se pueden reducir a un único parámetro.
% 
% If $\Lambda\subset M$ is invariant we say that $\phi|_\Lambda$ is \emph{efficient} if there are $\epsilon_0,r_1>0$ such that if 
%  $0<\delta<r_1$, $x\in \Lambda\setminus\fix(\phi)$, $g\in\rep$ satisfy:
%  \begin{itemize}
%   \item $\dist(\phi_t(x),\phi_{g(t)}(x))<\delta\|X(\phi_t(x))\|$ for all $t\in\R$ and
%   \item $\diam(\phi_{[t,g(t)]}(x))<\epsilon_0$ for all $t\in\R$
%  \end{itemize}
% then $\phi_{[t,g(t)]}(x)\subset B(\phi_t(x),\delta\|X(\phi_t(x))\|)$ for all $t\in\R$.
% 
% \aca otra versión más simple y ``ojalá`` equivalente:
% 
We say that $\phi|_\Lambda$ is \emph{efficient} 
if there is $\delta_*>0$ such that if 
 $0<\delta<\delta_*$, $x\in \Lambda\setminus\fix(\phi)$, $g\in\rep$ satisfy:
 \begin{itemize}
  \item $\dist(\phi_t(x),\phi_{g(t)}(x))<\delta\|X(\phi_t(x))\|$ for all $t\in\R$ and
  \item $\diam(\phi_{[t,g(t)]}(x))<\delta_*$ for all $t\in\R$
 \end{itemize}
then $\phi_{[t,g(t)]}(x)\subset B(\phi_t(x),\delta\|X(\phi_t(x))\|)$ for all $t\in\R$.

\begin{proof}
Given $\delta_*>0$, suppose that $0<\delta<\delta_*/2$ and define $g(t)=2\pi+t$. 
Since the regular orbits of $\phi$ have period $2\pi$, we have that
$\dist(\phi_t(\delta,0),\phi_{g(t)}(\delta,0))=0<\delta\|X(\phi_t(\delta,0))\|$ for all $t\in\R$. 
 Also, $\diam(\phi_{[t,g(t)]}(\delta,0))=2\delta<\epsilon_0$ for all $t\in\R$.
But $\phi_{[t,g(t)]}(\delta,0)=\{p\in\R^2:\|p\|=\delta\}$ which is not contained in $B(\phi_t(\delta,0),\delta\|X(\phi_t(\delta,0))\|)$.
Thus, $\phi$ is not efficient.
\end{proof}

The previous result shows that in Proposition \ref{propRescTiempo} it is necessary to assume that fixed points are dynamically isolated.

\subsection{Komuro and rescaled expansivity}
Now we can prove the main result of the paper.
\begin{thm}
\label{thmKestImpRes}
If $\phi|_\Lambda$ is $k^*$-expansive and efficient then it 
is rescaling expansive. 
\end{thm}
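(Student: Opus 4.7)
The plan is to combine Theorem \ref{thmCharK*}(c) with the converse part of Theorem \ref{thmResc}: $k^*$-expansivity provides, for any prescribed $\beta>0$, a reparametrization $g\in\rep$ matching $\phi_{h(t)}(y)$ with $\phi_{g(t)}(x)$ and keeping the diameter of $\phi_{[t,g(t)]}(x)$ below $\beta$; then efficiency upgrades this diameter bound to the containment of $\phi_{[t,g(t)]}(x)$ in a rescaled ball around $\phi_t(x)$, which via Lemma \ref{lem23WW} turns into the time estimate $|g(t)-t|<\epsilon$ required by rescaling expansivity. Equivalently, I would verify condition (*) of Theorem \ref{thmResc} and then invoke its converse direction.

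In detail, given $\epsilon>0$, I take $r_0$ from Lemma \ref{lem23WW}, $\delta_*$ from Definition \ref{dfEfficient}, and set $\beta=\min\{\epsilon,\delta_*,r_0\}$. Applying Theorem \ref{thmCharK*}(c) to this $\beta$ produces some $\delta_1>0$ so that whenever $\dist(\phi_t(x),\phi_{h(t)}(y))\le\delta_1$ for all $t\in\R$, there exists $g\in\rep$ with $\phi_{g(t)}(x)=\phi_{h(t)}(y)$ and $\diam(\phi_{[t,g(t)]}(x))<\beta$ for every $t$. Writing $B=\sup_{\Lambda}\|X\|$ (finite by compactness; the case $B=0$ forces $\Lambda\subset\fix(\phi)$ and rescaling expansivity is trivial), I choose
\[
\delta=\min\left\{\frac{\delta_1}{B},\,\delta_*,\,r_0,\,\frac{\epsilon}{3}\right\}.
\]

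Now suppose $\dist(\phi_t(x),\phi_{h(t)}(y))\le\delta\|X(\phi_t(x))\|$ for all $t$. If $x\in\fix(\phi)$ the right-hand side vanishes and $y=x$ is forced, and we are done. Otherwise the bound $\delta\|X(\phi_t(x))\|\le\delta B\le\delta_1$ lets me apply the previous paragraph to produce $g\in\rep$ with $\phi_{g(t)}(x)=\phi_{h(t)}(y)$ and $\diam(\phi_{[t,g(t)]}(x))<\beta\le\delta_*$ for every $t$. The efficiency hypothesis then yields $\phi_{[t,g(t)]}(x)\subset B(\phi_t(x),\delta\|X(\phi_t(x))\|)$ for every $t$, and Lemma \ref{lem23WW} (applicable since $\delta<r_0$ and $\phi_t(x)\notin\fix(\phi)$) gives $|g(t)-t|<3\delta\le\epsilon$. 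Consequently $\phi_{h(t)}(y)=\phi_{g(t)}(x)\in\phi_{[-\epsilon,\epsilon]}(\phi_t(x))$ for all $t$, which is rescaling expansivity.

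I do not expect a major obstacle, since the deep work is packaged into the previously proved tools: Theorem \ref{thmCharK*} converts $k^*$-expansivity into a reparametrization with explicit diameter control, efficiency converts diameter control into rescaled-ball control, and Lemma \ref{lem23WW} converts rescaled-ball control into the desired time estimate. The only real care required is keeping the chain of constants consistent and isolating the fixed-point case, where the rescaled inequality degenerates.
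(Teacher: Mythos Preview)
Your proposal is correct and follows essentially the same route as the paper. The paper's argument verifies condition (*) of Theorem~\ref{thmResc} by passing from the rescaled inequality to the unrescaled one via $\delta=\delta_1/\max_{z\in\Lambda}\|X(z)\|$ and invoking Theorem~\ref{thmCharK*}(c), then appeals to the converse of Theorem~\ref{thmResc} (which uses efficiency and Lemma~\ref{lem23WW}); you do the same, except that you unfold that converse explicitly and take a bit more care with the degenerate cases $B=0$ and $x\in\fix(\phi)$.
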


\begin{proof}
We will prove that $\phi|_\Lambda$ is rescaling expansive applying the equivalence given in 
Theorem \ref{thmResc}.  
Suppose $\beta>0$ given. 
Since $\phi|_\Lambda$ is $k^*$-expansive, by Theorem \ref{thmCharK*} (item c)
there is $\delta_1>0$ such that 
if $x,y\in\Lambda$, $h\in\repo$ and 
$\dist(\phi_t(x),\phi_{h(t)}(y))\leq\delta_1$ for all $t\in\R$,  
then there is $g\in\rep$ such that $\phi_{h(t)}(y)=\phi_{g(t)}(x)$ for all $t\in\R$ with $\diam(\phi_{[t,g(t)]}(x))<\beta$ for all $t\in\R$.
Consider 
\begin{equation}
 \label{ecuCOndDelta}
  \delta=\frac{\delta_1}{\max_{z\in\Lambda}\|X(z)\|}
\end{equation}
Suppose that 
$x,y\in\Lambda$, $h\in\repo$ and 
$
\dist(\phi_t(x),\phi_{h(t)}(y))\leq\delta\|X(\phi_t(x))\|
$
for all $t\in\R$. 
From \eqref{ecuCOndDelta} we have that
$
\dist(\phi_t(x),\phi_{h(t)}(y))\leq\delta_1
$
for all $t\in\R$.
As we said, by Theorem \ref{thmCharK*} there is $g\in\rep$ such that 
$\phi_{h(t)}(y)=\phi_{g(t)}(x)$
for all $t\in\R$ with $\diam(\phi_{[t,g(t)]}(x))<\beta$ for all $t\in\R$. 
From Theorem \ref{thmResc} we conclude that $\phi|_\Lambda$ is rescaling expansive.
\end{proof}

\begin{cor}
 If the curvature of the regular trajectories is bounded and $\phi|_\Lambda$ is $k^*$-expansive then $\phi|_\Lambda$ is 
 rescaling expansive.
\end{cor}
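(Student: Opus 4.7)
The plan is to obtain this corollary as an immediate two-step consequence of results already established in the section. The key observation is that the two hypotheses line up exactly with the two inputs of Theorem \ref{thmKestImpRes}: one gives efficiency, the other gives $k^*$-expansivity, and the theorem then delivers rescaling expansivity.

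First, I would invoke Proposition \ref{propCurvEff}, which states that if the curvature $\kappa$ of the regular trajectories is bounded on $\Lambda$, then $\phi|_\Lambda$ is efficient. The bound on curvature is exactly the hypothesis we are given, so this step is automatic; there is nothing to verify beyond quoting the proposition. (Note that no hypothesis on the fixed points is needed here, because bounded curvature over the regular set already encodes the geometric control on orbit arcs that the definition of efficiency requires, via Lemma \ref{lemDoCarmo}.)

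Second, with efficiency of $\phi|_\Lambda$ in hand and the $k^*$-expansivity of $\phi|_\Lambda$ assumed as hypothesis, I would apply Theorem \ref{thmKestImpRes} directly. That theorem asserts precisely that $k^*$-expansive plus efficient implies rescaling expansive, so the conclusion follows.

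There is really no obstacle here: the corollary is a compact repackaging of Proposition \ref{propCurvEff} followed by Theorem \ref{thmKestImpRes}. The only item worth a line of exposition is to remark that, among the sufficient conditions for efficiency developed in \S\ref{secEfficient}, the curvature bound is often the easiest to check in practice (for instance, it is automatic on any compact invariant set disjoint from $\fix(\phi)$, by the expression \eqref{ecuCurvatura} together with the $C^1$ regularity of $X$).
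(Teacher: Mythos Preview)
Your proposal is correct and matches the paper's own proof essentially verbatim: the paper also just cites Proposition \ref{propCurvEff} to get efficiency from the curvature bound and then applies Theorem \ref{thmKestImpRes}. There is nothing to add.
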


\begin{proof}
 It follows by Proposition \ref{propCurvEff} (if the curvature of the trajectories is bounded then the flow is efficient)
 and Theorem \ref{thmKestImpRes}.
\end{proof}

\begin{cor}
\label{corFixHypKomRes}
Let $\phi$ be a flow with $C^1$ velocity field $X$ 
% Let $X$ be a $C^1$ vector field on $M$ generating a flow $\phi$ 
such that each $\sigma\in\fix(\phi)$ is hyperbolic. 
 If $\phi|_\Lambda$ is $k^*$-expansive 
 then $\phi|_\Lambda$ is rescaling expansive. 
\end{cor}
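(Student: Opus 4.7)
The plan is to observe that the corollary is essentially a concatenation of two results already established in the paper: the efficiency of flows with hyperbolic singularities (Corollary \ref{corRescTiempo}) and the main implication from $k^*$-expansivity plus efficiency to rescaling expansivity (Theorem \ref{thmKestImpRes}). So the proof should be very short, and the task is mainly to verify that the hypotheses match cleanly.

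First I would note that by hypothesis each $\sigma\in\fix(\phi)$ is hyperbolic. Hyperbolicity of a fixed point of a $C^1$ vector field $X$ means that $d_\sigma X$ has no eigenvalue with zero real part, which in particular forces $d_\sigma X$ to be invertible; moreover, hyperbolic singularities are dynamically isolated. These are precisely the two hypotheses needed in Proposition \ref{propRescTiempo}, and this is why Corollary \ref{corRescTiempo} applies. Thus I would directly invoke Corollary \ref{corRescTiempo} to obtain that the flow $\phi$ on the ambient compact manifold $M$ is efficient in the sense of Definition \ref{dfEfficient}.

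Second, I would point out that efficiency of $\phi$ immediately transfers to the restriction $\phi|_\Lambda$: Definition \ref{dfEfficient} only imposes conditions on regular points $x\in\Lambda\setminus\fix(\phi)$, and the conclusion is about the orbit arcs $\phi_{[t,g(t)]}(x)$, which lie in $\Lambda$ by invariance. Hence the same $\delta_*$ that works for $\phi$ on $M$ works for $\phi|_\Lambda$. Combined with the hypothesis that $\phi|_\Lambda$ is $k^*$-expansive, both hypotheses of Theorem \ref{thmKestImpRes} are satisfied, and we conclude that $\phi|_\Lambda$ is rescaling expansive.

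There is no real obstacle, since both key intermediate results are already available. The only points worth verifying are the trivial facts that hyperbolicity implies both dynamical isolation and invertibility of the linearization, and that efficiency for the ambient flow descends to any compact invariant subset. Neither requires further calculation.
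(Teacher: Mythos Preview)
Your proof is correct and follows essentially the same route as the paper: use hyperbolicity to get invertibility of $d_\sigma X$ and dynamical isolation, deduce efficiency via Proposition \ref{propRescTiempo} (equivalently Corollary \ref{corRescTiempo}), and then apply Theorem \ref{thmKestImpRes}. Your added remark that efficiency on $M$ descends to the invariant set $\Lambda$ is a nice point that the paper leaves implicit.
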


\begin{proof}
Note that if $\sigma$ is a hyperbolic singularity then $d_\sigma X$ is invertible as it has no vanishing eigenvalue.
Since every hyperbolic fixed points is dynamically isolated and 
$d_\sigma X$ is invertible for all $\sigma\in\fix(\phi)$, 
applying Proposition \ref{propRescTiempo} we conclude that 
$\phi$ is efficient on $M$.
By Theorem \ref{thmKestImpRes}, $\phi|_\Lambda$ is rescaling expansive. 
\end{proof}

We refer the reader to \cite{WW} for the missing definitions involved in the next result.

\begin{cor}
 There is a residual subset $R$ of the space of $C^1$ vector fields of the compact manifold $M$ such that 
 for all $X\in R$ it holds that: 
 if $\phi|_\Lambda$ is $k^*$-expansive then $\phi|_\Lambda$ is rescaled expansive, locally star and multisingular hyperbolic, where $\phi$ is the flow induced by $X$.
\end{cor}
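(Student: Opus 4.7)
The plan is to assemble $R$ as the intersection of two residual sets, apply Corollary \ref{corFixHypKomRes} to pass from $k^*$-expansivity to rescaled expansivity, and then invoke the residual results from \cite{WW} to upgrade rescaled expansivity to the locally star and multisingular hyperbolic conclusions.

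More concretely, I would first take $R_1$ to be the residual subset of $C^1$ vector fields on $M$ all of whose fixed points are hyperbolic; this is part of the classical Kupka-Smale theorem in the $C^1$ topology. Next, I would take $R_2\subset \mathcal{X}^1(M)$ to be the residual subset furnished by the work of Wen and Wen \cite{WW} on which rescaled expansivity of a compact invariant set $\Lambda$ forces $\phi|_\Lambda$ to be locally star and, in turn, multisingular hyperbolic. Setting $R = R_1\cap R_2$ yields a residual subset, being a countable intersection of residual subsets in the Baire space $\mathcal{X}^1(M)$.

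Now fix $X\in R$ with induced flow $\phi$ and a compact invariant set $\Lambda$ such that $\phi|_\Lambda$ is $k^*$-expansive. Since $X\in R_1$, every $\sigma\in\fix(\phi)$ is hyperbolic, so Corollary \ref{corFixHypKomRes} applies and gives that $\phi|_\Lambda$ is rescaling expansive. Since $X\in R_2$, the results from \cite{WW} then upgrade rescaled expansivity to the locally star property and to multisingular hyperbolicity of $\phi|_\Lambda$, which is the desired conclusion.

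The main obstacle is not in the argument I sketch here, which is essentially a bookkeeping of residual sets, but in the ingredients imported from \cite{WW}: the residual-set machinery that converts rescaled expansivity into the structural conclusions of locally star and multisingular hyperbolic. Corollary \ref{corFixHypKomRes} provides the bridge between Komuro and rescaled expansivity under the mild (generic) hypothesis that the fixed points are hyperbolic, and this is precisely why a residual, rather than open-dense or everywhere, subset must be chosen.
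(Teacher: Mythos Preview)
Your proposal is correct and follows essentially the same route as the paper: intersect the (open and dense, hence residual) set of $C^1$ vector fields with all fixed points hyperbolic with the residual set from \cite[Theorem B]{WW}, then apply Corollary \ref{corFixHypKomRes} followed by \cite{WW}. The only cosmetic difference is that the paper notes the hyperbolic-fixed-point condition is actually open and dense rather than merely residual, but this does not affect the argument.
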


\begin{proof}
 It follows by Corollary \ref{corFixHypKomRes} and \cite[Theorem B]{WW} because the set of vector fields with all of its fixed points hyperbolic is open and dense in the set of $C^1$ vector fields of $M$.
\end{proof}

\section{Flows of surfaces}
\label{secFlowOfSurf}

Let $\phi\colon\R\times S\to S$ be a flow on the compact surface $S$. Let us introduce some terminology of surface flows. 
Suppose that $\sigma\in\fix(\phi)$ is dynamically isolated.  
If $x\in S$ is a regular point such that $\phi_t(x)\to \sigma$ as $t\to +\infty$ (resp. $t\to -\infty$)
then the orbit of $x$ is a \emph{stable} (resp. \emph{unstable}) \emph{separatrix} of $\sigma$. 
We say that $\sigma$ is a fixed point of \emph{saddle type} if it has a positive and finite number of separatrices.
The \emph{index} of a fixed point $\sigma$ of saddle type is 1-$n_s$, where $n_s$ is the number of stable separatrices of $\sigma$.
Consider two flows $\phi$ and $\psi$ defined on the same surface $S$ and
let $\sigma\in\fix(\phi)$ be a fixed point of index 0.
Suppose that:
\begin{enumerate}
\item $\fix(\psi)=\fix(\phi)\setminus\{\sigma\}$,
% $\sigma$ is regular for $\psi$,
\item if $x\notin\psi_\R (\sigma)$ then
$\phi_\R (x)=\psi_\R (x)$ and
\item the direction of both flows coincide on each orbit.
\end{enumerate}
In this case we say that $\psi$ \emph{removes} the fixed point $\sigma$ of $\phi$
and that $\phi$ \emph{adds} a fixed point to $\psi$.
We say that two flows defined on the same space are related by a \emph{time change} if they have the same orbits with the same orientation. 
A flow is \emph{strongly separating} \cite{ArK} if every time change is separating.

We will need the following results for a flow $\phi$ of a compact surface $S$: 
\begin{itemize}
\item \cite[Theorem 3.8]{ArK}: $\phi$ is strongly separating 
if and only if the fixed points are of saddle type and the union of their separatrices is dense in $S$. 
\item \cite[Lemma 4.1]{Ar}: Suppose that $\phi$ has not periodic points, $\fix(\phi)$ is a finite set,
the union of the stable separatrices is not dense
in the surface and $\Omega(\phi)=S$. Then $S$ is the torus and $\phi$ is an irrational flow.
\item \cite[Theorem 5.3]{Ar}: If $\psi$ removes a fixed point of $\phi$ then $\psi$ is $k^*$-expansive
if and only if $\phi$ is $k^*$-expansive.
\item \cite[Theorem 6.1]{Ar}: If $\phi$ has not fixed points of index 0 then the following statements are equivalent: 
\begin{itemize}
\item $\phi$ is $k^*$-expansive, 
\item the fixed points are of saddle type and the union of its separatrices is dense
in $S$, 
\item $\fix(\phi)$ is a finite and non empty set, $\Omega(\phi)=S$ and $\phi$ has no periodic points.
\end{itemize}
\end{itemize}
As usual $\Omega(\phi)$ denotes the non-wandering set of $\phi$.

\begin{thm}
\label{thmSupStSepRes}
Let $\phi$ be a flow of a compact surface with $C^1$ velocity field. 
If $\phi$ is strongly separating and efficient then it
is rescaling expansive.
\end{thm}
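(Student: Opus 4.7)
The plan is to reduce to Theorem \ref{thmKestImpRes}: since efficiency is already assumed, it suffices to prove that $\phi$ is $k^*$-expansive, and then that theorem gives rescaling expansivity at once.

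By \cite[Theorem 3.8]{ArK}, the strong separating hypothesis implies that every fixed point of $\phi$ is of saddle type and that the union of all their separatrices is dense in $S$. Since strong separating implies separating, Remark \ref{rmkSepFix} gives that $\fix(\phi)$ is finite. If $\phi$ has no fixed points of index $0$, then \cite[Theorem 6.1]{Ar} applies directly through its characterization (b) and yields that $\phi$ is $k^*$-expansive.

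Otherwise, I would enumerate the finitely many index-$0$ fixed points of $\phi$ and remove them one at a time as in \cite[Theorem 5.3]{Ar}, obtaining a flow $\psi$ whose fixed points are exactly the non-index-$0$ fixed points of $\phi$, still all of saddle type, and whose orbits agree with those of $\phi$ except that each removed index-$0$ point together with its stable and unstable separatrix becomes a single regular orbit of $\psi$. In order to apply \cite[Theorem 6.1]{Ar} to $\psi$ I would verify characterization (b), which reduces to showing that removing the finitely many separatrix arcs of the index-$0$ points from the originally dense union of separatrices of $\phi$ still leaves a dense subset of $S$. Granted that, \cite[Theorem 6.1]{Ar} gives $\psi$ is $k^*$-expansive, and iterated application of \cite[Theorem 5.3]{Ar} transfers $k^*$-expansivity back to $\phi$; then Theorem \ref{thmKestImpRes} combines $k^*$-expansivity with the efficiency hypothesis to conclude rescaling expansivity.

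The main obstacle is this density-preservation step at each removal: each removed arc is a one-dimensional orbit with a dynamically isolated fixed point as a limit, so one expects its closure not to contain an open set, but this needs a careful argument. A possible fallback is to verify characterization (c) of \cite[Theorem 6.1]{Ar} for $\psi$ instead, which demands $\Omega(\psi)=S$ and absence of periodic orbits; here the delicate issue is ruling out that the merger at a removed index-$0$ fixed point produces a periodic orbit, i.e.\ a homoclinic loop of $\phi$ at that point, for which strong separation together with the density of separatrices of $\phi$ should be used.
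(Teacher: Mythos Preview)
Your strategy of proving that $\phi$ must be $k^*$-expansive fails: strongly separating and efficient surface flows need not be $k^*$-expansive. The paper's Remark following this theorem spells out the counterexample (Example \ref{exaToroUnaSing}): an irrational flow on the torus with a single index-$0$ fixed point is strongly separating and efficient (its trajectories are reparameterized geodesics, so Proposition \ref{propCurvEff} applies), yet by \cite[Theorem 6.5]{Ar} the torus admits no $k^*$-expansive flow. In this example your density-preservation step is exactly what breaks: the only separatrices are the stable and unstable separatrix of that one fixed point, each a dense irrational orbit; after removing the fixed point you are left with $\psi$ having \emph{no} fixed points and hence no separatrices at all, so characterization (b) of \cite[Theorem 6.1]{Ar} is vacuously false. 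Your fallback to characterization (c) fails for the same reason: $\fix(\psi)=\emptyset$ violates the ``non-empty'' clause. Your heuristic that a removed separatrix ``should not contain an open set in its closure'' is precisely what goes wrong on the torus, where every orbit is dense.

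The paper's proof anticipates this: it splits into the $k^*$-expansive case (handled by Theorem \ref{thmKestImpRes} as you do) and the non-$k^*$-expansive case. In the latter it uses \cite[Theorem 5.3]{Ar} and \cite[Theorem 6.1]{Ar} in the contrapositive direction to force $\psi$ to have non-dense separatrix union, then rules out periodic orbits using strong separation, and invokes \cite[Lemma 4.1]{Ar} to pin down $S$ as the torus with $\psi$ an irrational flow. Rescaling expansivity is then argued directly for this concrete situation, using that $\|X(\phi_{t_n}(x))\|\to 0$ along sequences approaching a fixed point forces two rescaled-close orbits onto the same local leaf of the suspension.
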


\begin{proof}
If the flow is $k^*$-expansive then the result follows from Theorem \ref{thmKestImpRes}.

Suppose that $\phi$ is strongly separating but not $k^*$-expansive. 
Let $\psi$ be a flow of $S$ removing the fixed points of $\phi$ with index 0.
By \cite[Theorem 5.3]{Ar} we know that $\psi$ is not $k^*$-expansive.
By construction, $\fix(\psi)$ is finite (possibly empty), $\Omega(\psi)=S$, and the fixed points (if any) of $\psi$ are of saddle type. 
Applying \cite[Theorem 6.1]{Ar} we deduce that the union of its separatrices cannot be dense
in $S$. If $\psi$ has a periodic orbit $\gamma$, then the orbits close to $\gamma$ are periodic because $\psi$ has not wandering points. 
This easy contradicts that $\phi$ is strongly separating (as we can take a time change of $\phi$ with a cylinder of 
infinitely many periodic orbits with the same period). Therefore, we conclude that $\psi$ has no periodic orbit and we can apply 
\cite[Lemma 4.1]{Ar} to see that $S$ is the torus and $\psi$ is an irrational flow.
Equivalently, $\phi$ is obtained from an irrational flow on the torus by adding fixed points of index 0.

We will show that $\phi$ is rescaling expansive applying Theorem \ref{thmResc}. 
Assume that $\psi$, the flow removing the fixed points of $\phi$, is a suspension of an irrational rotation of the circle.
% Then $\dist(\psi_t(x),\psi_t(y))=\dist(x,y)$ for all $t\in\R$.
Let $x,y\in M$ be a regular points and suppose that $\dist(\phi_t(x),\phi_{h(t)}(y))\leq\delta\|X(\phi_t(x))\|$ for all $t\in\R$, where $X$ is 
the velocity field of $\phi$. 
Take $t_n\in\R$ such that $\phi_{t_n}(x)\to \sigma$ for some $\sigma\in\fix(\phi)$. 
In particular, $\|X(\phi_{t_n}(x))\|\to 0$ and 
\begin{equation}
 \label{ecuSupStSep}
 \dist(\phi_{t_n}(x),\phi_{h(t_n)}(y))\to 0.
\end{equation}
Since $\psi$ is the suspension of an irrational rotation, 
if $x$ and $y$ were not in the same local orbit then $\inf_{t\in\R}\dist(\phi_t(x),\phi_{h(t)}(y))>0$ 
(provided that $\delta$ is sufficiently small). This would contradict \eqref{ecuSupStSep}. Then $a$ and $y$ are in the same local orbit, which proves that $\phi$ is rescaling expansive.
\end{proof}

\begin{rmk}
In Example \ref{exaToroUnaSing} we considered an irrational flow on the torus with a fixed point of index 0. 
Denote by $\phi$ this flow. 
We know that it is strongly separating by \cite[Theorem 3.8]{ArK}. 
Since the regular trajectories of $\phi$ are reparameterized geodesics, 
they have vanishing curvature and 
by Proposition \ref{propCurvEff} we conclude that the flow is efficient.
Applying Theorem \ref{thmSupStSepRes} we have that $\phi$ is rescaling expansive.
By \cite[Theorem 6.5]{Ar} we know that $\phi$ is not $k^*$-expansive because the two-dimensional torus does not admit 
$k^*$-expansive flows.
That is, a rescaling expansive flow with a finite number of singular points may not be $k^*$-expansive.
\end{rmk}

\subsection{An example on the sphere}
\label{secExSph}
% Recordar el ejemplo de $S^2$ flujo norte-sur que muestra que la rescaling expansivity no es invariante por reparametrizaciones del flujo.
Let $S\subset \R^3$ be the sphere given by $x^2+y^2+z^2=1$ and consider the 
vector field $X$ on $TS$ defined as 
\[
 X(x,y,z)=(x^2+y^2)(xz,yz,-x^2-y^2)
\]
for $(x,y,z)\in S$. 

\begin{prop}
The flow $\phi$ induced by $X$ is rescaling expansive on $S$. 
\end{prop}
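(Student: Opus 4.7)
The plan is to invoke the converse direction of Theorem \ref{thmResc}, so I need to verify that $\phi$ is efficient and that it satisfies condition (*) of that theorem.

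Efficiency follows from Proposition \ref{propCurvEff}. Since $X = (x^2+y^2)Y$ where $Y(x,y,z) = (xz, yz, -x^2-y^2)$, the orbits of $\phi$ coincide with those of $Y$, and the trajectories of $Y$ on $S$ are the open meridians joining the poles $N = (0,0,1)$ and $S = (0,0,-1)$. These meridians are half great circles, hence arcs of unit circles in $\R^3$ with constant curvature $1$; in particular $\kappa$ is uniformly bounded on $S \setminus \fix(\phi)$, and Proposition \ref{propCurvEff} gives efficiency.

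For condition (*), a direct computation gives $\|X(x,y,z)\|^2 = (x^2+y^2)^3 = (1-z^2)^3$, so $\|X\| \le 1$ on $S$ and vanishes to order $r^3$ near each pole, where $r(p) = \sqrt{x^2+y^2}$. Given $\beta > 0$, I would choose $\delta < \beta$ (to be refined) and suppose $x,y \in S$ and $h \in \repo$ satisfy $\dist(\phi_t(x), \phi_{h(t)}(y)) \le \delta \|X(\phi_t(x))\|$ for all $t \in \R$. If $x \in \fix(\phi)$, the right-hand side is identically zero and we obtain $y = x$ with $g = \operatorname{id}$. If $x$ is regular, then $\phi_t(x) \to S$ as $t \to +\infty$, so $\|X(\phi_t(x))\| \to 0$ and in particular $\phi_{h(t)}(y) \to S$ as well. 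This rules out $y \in \fix(\phi)$: for $y = N$ the left side tends to $2$ while the right side tends to $0$; and for $y = S$ the estimates $\dist(\phi_t(x), S) \approx r(\phi_t(x))$ and $\|X(\phi_t(x))\| = r(\phi_t(x))^3$ would force $\delta\, r(\phi_t(x))^2 \ge 1$ for large $t$, contradicting $r(\phi_t(x)) \to 0$.

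The heart of the argument is showing $y \in \phi_\R(x)$. Using polar coordinates $(r, \theta)$ near $S$ (via the chart $(u,v) \mapsto (u, v, -\sqrt{1-u^2-v^2})$), the flow acts by $\dot \theta = 0$ and $\dot r = r^3 z$, so each regular orbit is confined to a fixed level set of $\theta$. Writing $\phi_t(x) = (r_1(t), \theta_x)$ and $\phi_{h(t)}(y) = (r_2(t), \theta_y)$, the Euclidean distance satisfies
\[
\dist(\phi_t(x), \phi_{h(t)}(y))^2 \ge 2 r_1 r_2 \bigl(1 - \cos(\theta_y - \theta_x)\bigr).
\]
Comparing both $r_1$ and $r_2$ with $\dist(\cdot, S)$ via the rescaling bound shows they are comparable for large $t$, and the hypothesis $\dist \le \delta r_1^3$ then yields $1 - \cos(\theta_y - \theta_x) \le C\delta^2 r_1^4 \to 0$, forcing $\theta_y = \theta_x$. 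Hence $y \in \phi_\R(x)$, so $y = \phi_u(x)$ for some $u \in \R$ and $g(t) := h(t) + u$ satisfies $\phi_{h(t)}(y) = \phi_{g(t)}(x)$. Finally, $\phi_{[t, g(t)]}(x)$ is an arc of the open half great circle $\phi_\R(x)$, so its diameter equals the chord distance between its endpoints, which is at most $\delta \|X(\phi_t(x))\| \le \delta < \beta$. This verifies (*), and Theorem \ref{thmResc} yields that $\phi$ is rescaling expansive. The main obstacle is the polar-coordinate estimate near the poles, where the cubic vanishing of $\|X\|$ must be played off against the linear transverse separation of distinct meridians.
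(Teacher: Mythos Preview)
Your argument is correct, and the geometric core coincides with the paper's: both exploit that $\|X(p)\|=\rho(p)^3$ vanishes cubically at the poles while distinct meridians separate only linearly in $\rho$, forcing any rescaled companion to lie on the same half great circle, after which the fact that orbit arcs are geodesic arcs controls their diameter by the endpoint distance. The packaging differs: you route through Theorem~\ref{thmResc}, first checking efficiency via Proposition~\ref{propCurvEff} (curvature $\equiv 1$ on great circles) and then verifying condition~(*), whereas the paper argues directly from the definition of rescaling expansivity, using the half-plane separation estimate $\dist(p',\pi(q))>k\rho(p')$ together with $\|X(p)\|/\rho(p)\to 0$ to force $\pi(p)=\pi(q)$, and then invoking Lemma~\ref{lem23WW} to conclude $|t-h(t)|<3\delta$. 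Your route has the virtue of illustrating the paper's general machinery on a concrete example; the paper's direct proof is shorter and avoids checking the standing hypothesis of Theorem~\ref{thmResc} that the fixed points are dynamically isolated (you should state this explicitly---it is immediate since every regular orbit runs from one pole to the other).
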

\begin{proof}
The flow $\phi$ has two fixed points, a repeller $\sigma_1=(0,0,1)$ and an attractor $\sigma_2=(0,0,-1)$. 
The dynamics is simple, the trajectories goes from $\sigma_1$ to $\sigma_2$ and are contained in vertical planes.
If $p=(x,y,z)$ define $\rho(p)=\sqrt{x^2+y^2}$.
Since $\dist(p,\sigma_i)$ is equivalent to $\rho(p)$, around $\sigma_i$, we have that
\begin{equation}
 \label{ecuEjEsf}
 \lim_{p\to\sigma_i}\frac{\|X(p)\|}{\rho(p)}=\lim_{p\to\sigma_i}\frac{\|X(p)\|}{\dist(p,\sigma_i)}=0.
\end{equation}
We see, Lemma \ref{lemRescTiempo}, that $d_{\sigma_i} X$ is not invertible, and the fixed points are not hyperbolic.
Consider $r_0$ from Lemma \ref{lem23WW}.
Given a regular point $p\in S$ let $\pi(p)$ be the half-plane containing the orbit of $p$ and with boundary the $z$-axis.
If $\pi(p)\neq\pi(q)$ then there is $k>0$, depending on the angle between the planes, such that 
$\dist(p',\pi(q))>k\rho(p')$ for all $p'\in\pi(p)$.

Suppose that for some $\delta\in (0,r_0)$, $p,q\in S$ and $h\in\repo$ it holds that 
$\dist(\phi_t(p),\phi_{h(t)}(q))\leq\delta\|X(\phi_t(p))\|$.
If $\pi(p)\neq\pi(q)$ then
\[
 k\rho(\phi_t(p))<\dist(\phi_t(p),\phi_{h(t)}(q))\leq\delta\|X(\phi_t(p))\|
\]
for all $t\in\R$.
Taking the limit $t\to+\infty$ we arrive to a contradiction with \eqref{ecuEjEsf}.
This proves that $\pi(p)=\pi(q)$, i.e., $p$ and $q$ are in the same orbit. 
Since the trajectories of the flow are (reparameterized) geodesics, we have that 
$\phi_{[t,h(t)]}(p)\subset B(x,\delta\|X(x)\|)$. 
By Lemma \ref{lem23WW}, we conclude that $|t-h(t)|<3\delta$, and $\phi$ is rescaling expansive. 
\end{proof}

\begin{rmk}
\label{rmkResNonInv}
With similar techniques it can be proved that the flow of $Y(x,y,z)=(xz,yz,-x^2-y^2)$ is not rescaling expansive. 
Essentially, this is because 
$$\lim_{p\to\sigma_i}\frac{\|Y(p)\|}{\rho(p)}=1.$$
Then, the property of rescaled expansivity is not invariant under time changes of the flow.
Note that the flows of $X$ and $Y$ are not separating.
\end{rmk}
% 
% 
% \section{ideas}
% 
% Given a non-negative function $V\colon \Lambda\to\R$ we say that
% the flow $\phi|_\Lambda$ is $V$-\emph{rescaling expansive} if for all $\epsilon>0$ 
% there is $\delta>0$ such that if 
% \begin{equation}
% % \label{ecuAcompResc}
% x,y\in\Lambda, h\in\repo\text{ and }\dist(\phi_t(x),\phi_{h(t)}(y))\leq\delta V(\phi_t(x))\text{ for all }t\in\R,  
% \end{equation}
% then $\phi_{h(t)}(y)\in\phi_{[-\epsilon,\epsilon]}(\phi_t(x))$ for all $t\in\R$.
% 
% Variante:
% 
% We say that
% the flow $\phi|_\Lambda$ is \emph{rescaling' expansive} if for all $\epsilon>0$ 
% there is 
% a non-negative function $V\colon \Lambda\to\R$
% such that if 
% \begin{equation}
% x,y\in\Lambda, h\in\repo\text{ and }\dist(\phi_t(x),\phi_{h(t)}(y))\leq V(\phi_t(x))\text{ for all }t\in\R,  
% \end{equation}
% then $\phi_{h(t)}(y)\in\phi_{[-\epsilon,\epsilon]}(\phi_t(x))$ for all $t\in\R$.

\begin{bibdiv}
\begin{biblist}

\bib{APPV}{article}{
author={V. Araujo},
author={M.J. Pacifico},
author={E.R. Pujals},
author={M. Viana},
title={Singular-hyperbolic attractors are chaotic},
journal={Trans. Amer. Math. Soc.},
volume={361},
year={2009},
pages={2431--2485}}

\bib{Ar}{article}{
author={A. Artigue},
title={Expansive flows of surfaces},
journal={Disc. \& cont. dyn. sys.},
volume={33},
number={2},
pages={505--525},
year={2013}}

\bib{ArK}{article}{
author = {A. Artigue},
title = {Kinematic expansive flows},
journal = {Ergodic Theory and Dynamical Systems},
volume = {36},
year = {2016},
pages = {390--421}}

\bib{BodL}{article}{
title={Star flows and multisingular hyperbolicity},
author={C. Bonatti}, 
author={A. da Luz},
% journal={pre-print},
note={arXiv:1705.05799},
year={2017}}

\bib{BW}{article}{
author={R. Bowen and P. Walters}, 
title={Expansive one-parameter flows}, 
journal={J. Diff. Eq.}, year={1972}, pages={180--193},
volume={12}}

\bib{Bru93}{article}{
author={M. Brunella},
title={Expansive flows on Seifert manifolds and torus bundles},
journal={Bol. Soc. Bras. Mat.},
volume={24},
year={1993},
pages={89--104}}

\bib{Willy}{book}{ 
author={W. Cordeiro},
title={Fluxos CW-expansivos},
publisher={Thesis, UFRJ, Brazil},
year={2015}}

\bib{DoCarmo}{book}{
author={M.P. do Carmo},
title={Differential Geometry of Curves and Surfaces},
publisher={Prentice-Hall, Inc. Englewood Cliffs, New Jersey},
year={1976}}

\bib{Flinn}{thesis}{
title={Expansive Flows},
author={L.W. Flinn},
year={1972},
school={University of Warwick},
type={Phd Thesis}}

\bib{Gura}{article}{
author= {A.A. Gura},
title={Horocycle flow on a surface of negative curvature is separating},
journal= {Mat. Zametki},
year={1984},
volume={36},
pages={279--284}}

\bib{Ham}{article}{
title={Dynamics of the Teichmuller flow on compact invariant sets},
author={U. Hamenstadt},
journal={J. Mod. Dyn.},
volume={4}, 
pages={393--418},
year={2010}}

\bib{IM}{article}{
author={T. Inaba},
author={S. Matsumoto},
title={Nonsingular expansive flows on 3-manifolds and foliations with circle prong singularities}, 
journal={Japan. J. Math.}, 
volume={16},
year={1990}, 
pages={329--340}} 

\bib{HK}{book}{
author={A. Katok},
author={B. Hasselblatt}, 
title={Introduction to the modern theory of dynamical systems},
publisher={Cambridge University Press, Cambridge},
year={1995}}

\bib{KS}{article}{
author = {H. Keynes},
author={M. Sears},
title = {Real-expansive flows and topological dimension},
journal = {Ergodic Theory and Dynamical Systems},
volume = {1},
year = {1981},
pages = {179--195}}

\bib{K}{article}{
author={M. Komuro}, 
title={Expansive properties of Lorenz
attractors}, journal={The Theory of dynamical systems and its
applications to nonlinear problems}, year={1984}, place={Kyoto},
pages={4--26}, publisher={World Sci. Singapure}}

\bib{MSS}{article}{
author={K. Moriyasu},
author={K. Sakai},
author={W. Sun},
title={$C^1$-stably expansive flows},
journal={Journal of Differential Equations},
volume={213},
year={2005},
pages={352--367}}

\bib{Pa}{article}{
author={M. Paternain},
title={Expansive flows and the fundamental group},
journal={Bull. Braz. Math. Soc.},
number={2},
volume={24},
pages={179--199},
year={1993}}

\bib{WW}{article}{
author={X. Wen},
author={L. Wen},
title={A rescaled expansiveness for flows},
year={2017},
note={arXiv:1706.09702}}

\end{biblist}
\end{bibdiv}

\end{document}